\documentclass[12pt]{article}
\usepackage{sty-jir}
\usepackage[
  left=1.5in,
  right=1.5in,
  top=1in,
  bottom=1in
]{geometry}

\usepackage{mathrsfs}

\def\bfw{\mathbf{w}}
\def\bfu{\mathbf{u}}
\def\bfx{\mathbf{x}}

\newcommand\pp{{\mathbb{P}}}

\newcommand\cc{{\mathbb{C}}}

\DeclareMathOperator{\GED}{gEDdeg}                  
\DeclareMathOperator{\UED}{uEDdeg}                  
\DeclareMathOperator{\DED}{EDdefect}

\DeclareMathOperator{\codim}{codim}              % codim
\DeclareMathOperator{\reg}{reg}                  % reg
\DeclareMathOperator{\sing}{Sing}                  
\DeclareMathOperator{\Eu}{Eu}

\DeclareMathOperator{\rank}{Rank}

\DeclareMathOperator{\EDdeg}{EDdeg}

\def\bC{\mathbb{C}}

\def\bP{\mathbb{P}}

\def\lra{\longrightarrow}
\def\bQ{\mathbb{Q}}
\def\bs{{\bf s}}

\def\cL{\mathcal{L}}

\def\C{\mathbb{C}}

\newcommand{\PP}{\mathbb{P}}

\newcommand{\CC}{\mathbb{C}}

\usepackage{array}

\title{Euclidean distance degree defect \\of singular projective varieties}

\author{Lauren\c{t}iu G. Maxim, Jose Israel Rodriguez, Botong Wang}

\newcommand{\XWhitney}{\mathscr{X}}
\begin{document}
%\input{_title-block-REG}

%\listoftodos \newpage
\date{}
\maketitle %title, author, date

\begin{abstract}    
The unit Euclidean distance degree and the generic Euclidean distance degree are two well-studied invariants of projective varieties. These quantities measure the algebraic complexity of nearest-point problems on a variety, and in many examples arising in optimization, engineering, statistics, and data science, there is a significant gap between them. We refer to this difference as the defect of the Euclidean distance (ED) degree.

In this paper, we provide a constructible enhancement and a topological formula for the defect of the ED degree of an arbitrary complex projective variety, extending our previous results from the smooth setting. Since the generic Euclidean distance degree is typically more tractable, our approach offers a new method for computing ED degrees in broad generality.
\end{abstract} 

%\maketitle

%\newpage

%\usepackage{blindtext}
%\usepackage{titlesec}
\tableofcontents

%\newpage
\section{Introduction}

\subsection{Background on ED degrees}
The notion of Euclidean distance degree was introduced in \cite{DHOST} as a measure of the algebraic complexity for 
nearest
point problems of an algebraic variety.
It counts the number of complex critical points of the squared Euclidean distance function from a generic data point to a given variety.

A natural
generalization arises when one considers weighted least squares optimization as follows.

\begin{definition}%[Affine]
Let $X$ be an irreducible closed subvariety of $\cc^n$. 
For a given weight $\bfw=(w_1,\dots,w_n) \in (\C^*)^n$, the
\demph{$\bfw$-weighted Euclidean distance degree} of $X$ is the 
 number of complex critical points of 
 \[
 d_{\bfu,\bfw}(\bfx):=\sum_{i=1}^n w_i(x_i-u_i)^2
 \] 
on the smooth locus $X_{\reg}$ of $X$, for generic data $\bfu=(u_1,\dots,u_n) \in \C^n$.
 We denote this degree by $\EDdeg_\bfw(X)$. 
 \end{definition}

Many models in applications are invariant under rescaling 
and are therefore naturally viewed as projective varieties.
The notion of $\bfw$-weighted ED degree can be defined for these models as well, using affine cones.

\begin{definition}\label{def:projective-weighted-ed-degree}
If X is an irreducible closed subvariety of the complex projective space $\pp^n$, we define the (projective) $\bfw$-weighted Euclidean distance degree of $X$ by
\[\EDdeg_\bfw(X) := \EDdeg_\bfw(C(X)),\]
where $C(X)$ is the affine cone of $X$ in $\cc^{n+1}$. 
 \end{definition}

When the weight vector $\bfw$ is generic, we refer to $\EDdeg_\bfw(X)$ as the \demph{generic ED degree} and denote it by $\GED(X)$. When $\bfw=\bf{1}$, the all-ones vector, we obtain the \demph{unit ED degree}, denoted $\UED(X)$. While these notions can be defined in both affine and projective settings, in this paper we focus on the projective setting.

It was proved in \cite[Theorem 1.3]{MRWp} (see also \cite[Theorem 8.1]{AH} for the smooth case) that the unit ED degree 
of a projective variety can be computed as an Euler characteristic weighted by a certain constructible function. More precisely, one has the following result.
\begin{theorem}\label{thm1}
Let $X \subset \bP^n$ be an irreducible closed subvariety. Then
\begin{equation}\label{eq1} \UED(X)= (-1)^{\dim (X)} \chi( {\rm Eu}_X \vert_{\pp^n  \setminus (Q \cup H)}),\end{equation}
where ${\rm Eu}_X$ is MacPherson's local Euler obstruction function on $X$, 
$Q$ is the isotropic quadric 
\[Q:=\left\{[x_0:\dots:x_n]\in\pp^n\mid x_0^2+x_1^2\cdots+x_n^2=0\right\},\]
$H$ is a general hyperplane in $\bP^n$, and $\chi(-)$ denotes the Euler characteristic of a constructible function. In particular, if $X$ is smooth, then 
\begin{equation}\label{eq2}\UED(X)= (-1)^{\dim (X)} \chi(X \setminus (Q \cup H)).\end{equation}
\end{theorem}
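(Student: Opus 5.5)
The plan is to reduce the count of critical points of the unit distance function on the affine cone to a count of critical points of a master function on an affine open subset of $X$. Then we compute that count topologically, using the Euler-obstruction version of the Morse-theoretic index formula for a generic linear (or affine) function.

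\textbf{Step 1: reduction to a projective critical-point problem.} Write $q(\bfx)=\sum_{i=0}^n x_i^2$ and let $\bfu$ be generic. For $\bfx\in C(X)_{\reg}$ with $\bfx\neq 0$, $\bfx$ is critical for $d_{\bfu,\mathbf{1}}$ precisely when $\bfx-\bfu$ is orthogonal to $T_\bfx C(X)$. Since $\bfx\in T_\bfx C(X)$, this forces $q(\bfx)=\langle \bfx,\bfu\rangle$. By homogeneity, the critical points are then in bijection with the critical points on $X_{\reg}\setminus (Q\cup H_\bfu)$ of the rational function
\[
\ell_\bfu([\bfx]) \;=\; \frac{\langle \bfu,\bfx\rangle^2}{q(\bfx)},
\]
where $H_\bfu=\{\langle\bfu,\bfx\rangle=0\}$. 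Equivalently, we may use $\ell_\bfu$ restricted to the affine variety $U=X\setminus (Q\cup H)$, with $H$ a general hyperplane. I would check that generic $\bfu$ excludes critical points with $q(\bfx)=0$ or $\langle\bfu,\bfx\rangle=0$, and I expect this to be a standard verification. Note that $X\setminus Q$ is affine and that $q/h^2$, for a linear form $h$, is a regular function on $\pp^n\setminus H$. So up to a change of coordinates, $\UED(X)$ is the number of critical points on $U_{\reg}$ of a function of the form $g=h_\bfu^2/q$, or equivalently of $1/g=q/h_\bfu^2$ on $\pp^n\setminus(Q\cup H_\bfu)$.

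\textbf{Step 2: a topological count.} On $V=\pp^n\setminus H\cong\cc^n$, the function $f=q/h^2$ is a polynomial. The family $f_\bfu$ varies, through the choice of $H=H_\bfu$, as a general linear projection in the appropriate Veronese-type embedding. I would invoke the stratified Morse theory result of Seade--Tib\u{a}r--Verjovsky / Schürmann: for an affine variety $Z$ and a function whose critical points are all nondegenerate on the regular part and which has no critical points on the singular strata or at infinity, the number of critical points on $Z_{\reg}$ equals $(-1)^{\dim Z}\chi(\Eu_Z|_{\text{generic fiber complement}})$. Applied to $Z=X\setminus H$ and the function $f$, the count becomes $(-1)^{\dim X}\bigl(\chi(\Eu_X|_{X\setminus H})-\chi(\Eu_X|_{X\cap\{f=0\}\setminus H})\bigr)$. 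Since $\{f=0\}=Q$, this equals $(-1)^{\dim X}\chi(\Eu_X|_{\pp^n\setminus(Q\cup H)})$. The smooth case \eqref{eq2} then follows immediately, because $\Eu_X=1_X$ when $X$ is smooth.

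\textbf{Main obstacle.} The hard part is the genericity needed for the Morse-theoretic formula. One must show that for generic $\bfu$ there are no critical points of the stratified function on singular strata, on $Q$, or ``at infinity'' (along $H$), and that the critical points are nondegenerate. The point is that the perturbation is only by the choice of $H_\bfu$, not by a fully generic linear function. My first approach would be to embed via the map $\bfx\mapsto(\bfx,q(\bfx))$, or to pass to the Veronese so that $f$ becomes a generic linear function. I would then apply a Whitney-stratified Bertini/Kleiman argument, using a stratification of $X$ refined by $Q$ and $H$, to rule out these bad critical points. Alternatively, I would treat the boundary using the Euler obstruction of the pair via a logarithmic-cotangent / characteristic-cycle computation, namely $\chi(\Eu_X|_U)$ expressed as the degree of $CC(\Eu_X|_U)$ against a generic section of $T^*$.
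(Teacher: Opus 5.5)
The paper does not prove this statement; it quotes it from \cite{MRWp} (and \cite{AH} for smooth $X$), so your argument has to stand on its own. Your Step~1 is fine: for generic $\bfu$, critical points of $d_{\bfu,\mathbf 1}$ on $C(X)_{\reg}$ correspond to critical points of $f=q/h_\bfu^2$ on $X_{\reg}\setminus(Q\cup H_\bfu)$.

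\textbf{Where Step~2 breaks.} The count you invoke, applied to $Z=X\setminus H$, counts all critical points of $f$ on $Z_{\reg}$ against a \emph{generic} fiber. You then substitute the zero fiber $X\cap Q\setminus H$. But $0$ is in general a critical value, and its critical points cannot be removed by moving $H$:
\begin{itemize}
\item At $p\in X_{\reg}\cap Q\setminus H$ one has $df_p=dq_p/h(p)^2$ on $T_pX$, which vanishes exactly where $X$ is tangent to $Q$.
\item A point stratum of $X$ lying on $Q$ (e.g.\ the node in Example~\ref{example nodal surface}) is a stratified critical point for every $H$.
\end{itemize}
So the hypothesis ``no critical points on $Q$'' that you plan to verify is false whenever $\DED(X)\neq 0$. ``No critical points on singular strata'' also fails in general. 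What actually happens is that nondegenerate stratified critical points on non-open strata contribute nothing to $\varphi(\Eu_X)$, because $CC(\Eu_X)$ is the conormal cycle of $X$.

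\textbf{What is missing.} The bookkeeping that does work is to view $f\colon U=X\setminus(Q\cup H)\to\CC^*$. Since $\chi(\CC^*)=0$, $\chi(\Eu_X|_U)$ is the sum of the jumps of $a\mapsto\chi(\Eu_X|_{f^{-1}(a)})$ over $a\in\CC^*$. This equals $(-1)^{\dim X}\UED(X)$ \emph{provided} $f$ has no vanishing cycles at infinity, and your proposal does not supply that. The only dangerous boundary points lie on the base locus $X\cap Q\cap H$ of the pencil $\{q-ah^2\}$. All members of this pencil are tangent to one another along $Q\cap H$ (the pencil contains $2H$), so Proposition~\ref{p31}-type transversality does not apply. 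Your devices also do not make $f$ generic. Under the Veronese map, $f$ becomes $L_q/L_{h^2}$ with $L_q$ fixed and $L_{h^2}$ tangent to $v_2(\PP^n)$ along $v_2(H)$. Under $\bfx\mapsto(\bfx,q(\bfx))$ on $\PP^n\setminus H$, it is a fixed coordinate function.

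\textbf{A way to close the gap.} Do the Morse theory on the cone before descending to $X$; then all genericity is automatic.
\begin{itemize}
\item Under the graph embedding $\bfx\mapsto(\bfx,q(\bfx))$ of $C(X)$, $d_{\bfu,\mathbf 1}$ becomes $y-2\langle\bfu,\bfx\rangle$ plus a constant. For generic $\bfu$ this is, up to scale, a generic linear function. Writing $\ell=q-2\langle\bfu,\cdot\rangle$, the global index formula of Seade--Tib\u{a}r--Verjovsky gives $\UED(X)=(-1)^{\dim X+1}\bigl(\chi(\Eu_{C(X)})-\chi(\Eu_{C(X)}|_{\ell^{-1}(c)})\bigr)$ for generic $c$.
\item You may take $c=0$. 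A generic linear function has no vanishing cycles at infinity, so only points of $\ell^{-1}(0)$ matter. Off the vertex, a stratified critical point satisfies $q(\bfx)=\langle\bfu,\bfx\rangle$ (your Euler relation), so $\ell=-q$ there; it lies on $\ell^{-1}(0)$ only if it lies on $Q$, which generic $\bfu$ excludes. At the vertex, $d\ell$ is a nondegenerate covector once $[\bfu]$ avoids the dual varieties of the stratum closures. By Brasselet--L\^e--Seade the vertex then carries no vanishing cycle.
\item Along the $\CC^*$-fibration $C(X)\setminus\{0\}\to X$, $\Eu_{C(X)}$ is pulled back from $\Eu_X$, so $\chi(\Eu_{C(X)})=\Eu_{C(X)}(0)$.
\item A line over $[\bfx]$ meets $\ell^{-1}(0)\setminus\{0\}$ in exactly one point if $[\bfx]\notin Q\cup H_\bfu$. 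It lies in it, contributing $\chi(\CC^*)=0$, if $[\bfx]\in Q\cap H_\bfu$. It misses it otherwise. Hence $\chi(\Eu_{C(X)}|_{\ell^{-1}(0)})=\Eu_{C(X)}(0)+\chi(\Eu_X|_{X\setminus(Q\cup H_\bfu)})$.
\end{itemize}
Subtracting the last two identities gives \eqref{eq1}.
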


Moreover, as already noted in \cite{ED-degree-MR4136171}, the proof of Theorem \ref{thm1} can be extended to the computation of $\EDdeg_\bfw(X)$, for an arbitrary weight $\bfw \in (\C^*)^{n+1}$. Indeed, if one replaces $Q$ by
the quadric 
\[Q_{\bfw}=\left\{[x_0:\dots:x_n]\in\pp^n\mid w_0x_0^2+w_1x_1^2\cdots+w_nx_n^2=0\right\},\] the following holds.

\begin{theorem}\label{thm2}
Let $X \subset \bP^n$ be an irreducible closed subvariety. Then,
\begin{equation}\label{eq3} 
\EDdeg_\bfw(X)= (-1)^{\dim (X)} \chi( {\rm Eu}_X \vert_{\pp^n \setminus (Q_\bfw \cup H)}),
\end{equation}
with $H$ a general hyperplane. 
In particular, if $X$ is smooth, one has the equality 
\begin{equation}\label{eq4}
\EDdeg_\bfw(X)= (-1)^{\dim (X)} \chi(X \setminus (Q_\bfw \cup H)).
\end{equation}
\end{theorem}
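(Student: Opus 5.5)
The plan is to reduce Theorem \ref{thm2} to Theorem \ref{thm1} by a linear change of coordinates. Fix $\bfw=(w_0,\dots,w_n)\in(\C^*)^{n+1}$ and choose square roots $s_i$ with $s_i^2=w_i$. Let $\phi:\C^{n+1}\to\C^{n+1}$ be the diagonal map $\phi(\bfx)=(s_0x_0,\dots,s_nx_n)$; it is an invertible linear map and induces a projective automorphism $\bar\phi$ of $\pp^n$. One checks directly that $\bar\phi(Q_\bfw)=Q$, since $\sum w_ix_i^2=\sum (s_ix_i)^2$. Set $X'=\bar\phi(X)$, so that $C(X')=\phi(C(X))$.

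The first step is the identity $\EDdeg_\bfw(X)=\UED(X')$. For data $\bfu$ we have $d_{\bfu,\bfw}(\bfx)=\sum_i (s_ix_i-s_iu_i)^2=d_{\phi(\bfu),\mathbf 1}(\phi(\bfx))$. Since $\phi$ is an isomorphism, it maps the smooth locus of $C(X)$ onto the smooth locus of $C(X')$, and critical points of $d_{\bfu,\bfw}$ on $C(X)_{\reg}$ correspond bijectively to critical points of $d_{\phi(\bfu),\mathbf 1}$ on $C(X')_{\reg}$. Because $\phi$ is a linear isomorphism, generic $\bfu$ corresponds to generic $\phi(\bfu)$. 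Hence the two counts agree.

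The second step applies Theorem \ref{thm1} to $X'$, giving $\UED(X')=(-1)^{\dim X'}\chi(\Eu_{X'}|_{\pp^n\setminus(Q\cup H')})$ for a general hyperplane $H'$. We then transport this back along $\bar\phi$. The local Euler obstruction is intrinsic and therefore invariant under isomorphisms, so $\Eu_{X'}\circ\bar\phi=\Eu_X$. The Euler characteristic of a constructible function is preserved under pushforward by a homeomorphism. Also $\bar\phi^{-1}(Q\cup H')=Q_\bfw\cup H$, where $H=\bar\phi^{-1}(H')$, and $H$ is again a general hyperplane because $\bar\phi$ acts on the dual projective space by an automorphism. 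Finally $\dim X'=\dim X$. Together these give \eqref{eq3}.

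For the smooth case \eqref{eq4}, note that $\Eu_X=1_X$ when $X$ is smooth, so the weighted Euler characteristic reduces to the topological $\chi(X\setminus(Q_\bfw\cup H))$. The only point needing care is the genericity of $H$: the ``general'' condition in Theorem \ref{thm1} is a Zariski-open condition depending on $X'$, and its preimage under the dual automorphism is again a nonempty Zariski open set, which is all that is needed. The rest is routine bookkeeping.
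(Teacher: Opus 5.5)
Your proof is correct, but it follows a different route from the paper. The paper does not deduce Theorem~\ref{thm2} from Theorem~\ref{thm1}. It only remarks, following \cite{ED-degree-MR4136171}, that the proof of Theorem~\ref{thm1} carries over unchanged once $Q$ is replaced by $Q_\bfw$, because $Q$ enters that argument only as the zero locus of the quadratic form in the distance function.

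You instead use Theorem~\ref{thm1} as a black box. The diagonal rescaling $x_i\mapsto\sqrt{w_i}\,x_i$ is a projective automorphism $\bar\phi$ that sends $Q_\bfw$ to $Q$ and turns $d_{\bfu,\bfw}$ into $d_{\phi(\bfu),\mathbf 1}$, so $\EDdeg_\bfw(X)=\UED(\bar\phi(X))$. You then check that every ingredient on the right of \eqref{eq3} is invariant under this automorphism: the local Euler obstruction, the Euler characteristic of a constructible function, and the genericity of $H$.

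Your approach is more economical and self-contained. It needs no inspection of the proof of Theorem~\ref{thm1}, only that all $w_i\neq 0$ so the square roots give an invertible map. It also shows that nothing special about diagonal weights is used, since any nondegenerate quadratic form over $\C$ is linearly equivalent to $\sum x_i^2$. The paper's remark is shorter to state, but it leaves the reader to verify that the original argument never used the specific form of $Q$.
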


These formulas give a topological description of the weighted ED degree allowing 
techniques from topology, algebraic geometry, as well as  singularity theory, to be applied to their computation.

 The unit ED degree 
 is in general difficult to compute even if $X$ is smooth, since the isotropic quadric $Q$ may intersect $X$ non-transversally. 
On the other hand, for generic weight $\bfw$, the quadric $Q_\bfw$ intersects $X$ transversally, and the computation of $\GED(X)$ is more manageable, e.g., (see \cite{DHOST}, \cite{OSS2014}).

%\medskip
\subsection{ED degree defect and statements of results}\label{ss:statement-results}

In this paper, we evaluate the difference between generic and unit ED degree in terms of invariants of the singularities of 
$X\cap Q$. This leads to formulas for the unit ED degree that may otherwise be difficult to obtain directly. 
Of course, if the difference is zero then the unit ED degree equals the more amenable generic ED degree. 
That is,
we analyze the difference $$\DED(X)\coloneqq\GED(X)-\UED(X),$$  which in \cite{ED-degree-MR4136171} was referred to as the \demph{defect} of the Euclidean distance degree. It is known that $\DED(X)$ is non-negative, and for many varieties appearing in optimization, engineering, statistics, and data science, the defect is  substantial.

\medskip

\subsubsection*{A pencil of quadric hypersurfaces}

Let $X$ be an irreducible complex projective variety in $\pp^n$. We assume throughout that $X$ is not contained in the isotropic quadric  
$$Q=\{[x_0:\ldots:x_n]\in \PP^n: x_0^2+\cdots+x_n^2=0\}.$$
We also assume $\bfw\in (\CC^*)^{n+1}$ is generic when considering
the pencil of quadrics parameterized  by $\lambda=[\lambda_0:\lambda_1]\in \pp^1$:
\[Q_{\lambda}:=\{f_{\lambda}=\lambda_1(x_0^2+\cdots+x_n^2)+{\lambda_0} (w_0x_0^2+\cdots+w_nx_n^2)=0\}.\]
The restriction  to $X$ gives a corresponding pencil of quadric hypersurfaces 
\[
\{X_{\lambda} :=X \cap Q_{\lambda}\}_{\lambda\in \PP^1}
\]
on $X$. 
A generic member  of this pencil 
corresponds to ${\lambda}=[1:0]$ and is given by
\[	X_{\infty}:=
X \cap Q_\infty
\]
with $Q_\infty=Q_\bfw$.
The special member of the pencil corresponds to ${\lambda}=[0:1]$ and is given by
\[	X_0:=X \cap Q_0 
\] 
for $Q_{0}=Q$ the isotropic quadric. 
In terms of $f_\lambda$ we have 
$X_{\infty}=X \cap \{f_\infty=0\}$ and 
$X_0 =X \cap \{f_0=0\}$,
where 
\[
f_0(x)=x_0^2+\cdots+x_n^2
\quad\text{and}\quad
f_\infty(x)=w_0x_0^2+\cdots+w_nx_n^2.
\]

\subsubsection*{Incidence variety of the pencil and the associated vanishing cycles}
%\medskip

We fix a Whitney stratification $\XWhitney$ of $X$. 
For a generic choice of the weight $\bfw$, the quadric $Q_\infty=Q_\bfw$ intersects all strata 
transversally.

Consider the incidence variety of the pencil, that is, 
\begin{equation}\label{eq:tilde-X-incidence-pencil}
\widetilde{X}:=\{({\lambda},x)\in \pp^1 \times X \mid x \in X_{\lambda}\}
\end{equation}
with projection maps 
\[\pi:\widetilde{X} \to \pp^1\quad \text{and}\quad p:\widetilde{X} \to X.\]
Note that $\widetilde{X}$ is the restriction to $\pp^1 \times X$ of the blowup of $\pp^n$ along the base locus 
$B:=Q_0 \cap Q_{\infty}$ of the pencil.
We have $X_{\lambda}\cong \widetilde{X}_{\lambda}:=\pi^{-1}({\lambda})$ for any ${\lambda} \in \pp^1$. 

We define a regular function
\begin{equation}\label{eq:our-function-f}
f:=\frac{f_0}{f_\infty}\colon X\setminus X_\infty \lra \bC,
\end{equation}
by restricting the function $f_0/f_\infty$, originally defined on $\pp^n\setminus Q_\infty$, to $X\setminus X_\infty$. 
By construction, the zero fiber of $f$ is
\[
f^{-1}(0)=X_0\setminus X_\infty.
\]

The main ingredient in our new formula for the ED degree defect involves the vanishing cycle of the function $f$, as a transformation of constructible functions. To help set the notations, we review some of the background here. For textbook references on nearby and vanishing cycles see \cite{Dim04,Max, MS22}.  For further background material on Whitney stratifications and  constructible functions we refer the reader to our survey \cite{MRWh}; see also \cite{ED-degree-MR4136171}.

For 
 $f:X\setminus X_\infty \to \bC$ as above,
 the nearby cycle functor 
\begin{equation}\label{eq:define-nearby-cycle-functor}
\psi_f:CF(X\setminus X_\infty) \to CF(X_0\setminus X_\infty)
\end{equation}
 is defined on the groups of constructible functions 
 $CF(X\setminus X_\infty)$ and valued on $CF(X_0\setminus X_\infty)$,  
 as follows. 
For $\alpha \in CF(X\setminus X_\infty)$ a constructible function on $X\setminus X_\infty$, $\psi_f(\alpha)$ is the constructible function on $X_0\setminus X_\infty$ whose value at $x \in X_0\setminus X_\infty$ is given by
$$\psi_f(\alpha)(x):=\chi(\alpha|_{F_x}),$$
where $F_x$ denotes the Milnor fiber of $f$ at $x$. 

\begin{definition}
With the notation above, the \demph{vanishing cycle functor} of $f$
is  defined as
\[
\varphi_f:CF(X\setminus X_\infty) \to CF(X_0\setminus X_\infty),\quad 
\alpha\mapsto \psi_f(\alpha) - \alpha\vert_{f^{-1}(0)}. 
\]
\end{definition}
The definition can be restated 
in terms of the quadrics $Q_\bfw$ and $Q$ as
\[
\varphi_f:CF(X\setminus  Q_\bfw) \to CF((X\cap Q)\setminus  Q_\bfw),\quad 
\alpha\mapsto \psi_f(\alpha) - \alpha\vert_{X\cap Q \setminus  Q_\bfw}.
\]
When $\alpha=\Eu_{X\setminus Q_\bfw}$ is the local Euler obstruction function of $X\setminus Q_\bfw$,  
we have a constructible function 
\begin{equation}\label{eq:Eu-X-minus-Q-w}
\varphi_f(\Eu_{X\setminus Q_\bfw}):(X\cap Q)\setminus Q_\bfw\to \mathbb{Z}.
\end{equation}

Next, 
for $\bfu\in \CC^{n+1}\setminus \{0\}$
let $H_\bfu$ denote the  hyperplane 
\[H_\bfu:=\{
[x_0:\dots:x_n]\in \PP^n: 
u_0x_0+\cdots+u_nx_n=0
\}.\]  
We denote the restriction of \eqref{eq:Eu-X-minus-Q-w} to the complement of $H_\bfu$ by
\[
\varphi_f(\Eu_{X\setminus Q_\bfw})|_{(X\cap Q)\setminus (Q_\bfw\cup H_\bfu )}. 
\]
This constructible function appears in our main result on the ED degree defect when $H_\bfu$ is a generic hyperplane. 
Our result is formulated in terms of the Euler characteristic of a constructible function; we now review its definition.

Fix $\alpha\in CF(Z)$ a constructible function on a variety $Z$, and
let $\mathscr{Z}$ be a Whitney stratification of $Z$ such that $\alpha$
is constant on each stratum of $\mathscr{Z}$.
Then, the \emph{Euler characteristic} of $\alpha$ 
is
\begin{equation}
\chi(\alpha):=\sum_{V \in \mathscr{Z}} \chi(V) \cdot \alpha(V),
\end{equation} 
where $\alpha(V)$ denotes the (constant) value of $\alpha$ on the stratum $V \in \mathscr{Z}$ and  $\chi(V)$ is the topological Euler characteristic of $V$.

\subsubsection*{ED degree defect as a topological invariant}

A topological interpretation of the ED degree defect in terms of invariants of singularities of $X \cap Q$ was given in \cite{ED-degree-MR4136171} in the case when $X$ is a smooth irreducible complex projective variety in $\pp^n$, by using the above pencil of quadrics on $X$ and the associated vanishing cycles. In this paper we extend the results of \cite{ED-degree-MR4136171} to an arbitrary complex projective variety $X$. 
A main motivator of this extension is because many models in applications are singular.
As already noticed in \cite{ED-degree-MR4136171}, the ED degree defect can in general be computed much easier than computing $\GED(X)$ and $\UED(X)$ individually.

With the above assumptions and notations, our first result, generalizing \cite[Theorem 3.2]{ED-degree-MR4136171}, can be formulated as follows.
\begin{theorem}\label{th-main}
Let $X \subset \pp^n$ be an irreducible complex projective variety, and let $\bfw$ be a generic weight. 
Let $f$ be defined as in \eqref{eq:our-function-f}.
Then,
	 \begin{equation}\label{th-eq}
	\DED(X)
	= 	(-1)^{\dim (X)-1} \chi\big((\varphi_{f} (\Eu_{X \setminus Q_\bfw}))|_{X_0\setminus(Q_\bfw \cup H)}\big)
	 \end{equation}
     where $H$ is a generic hyperplane in $\mathbb{P}^n$.
\end{theorem}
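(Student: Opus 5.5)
By Theorem \ref{thm2} applied to $\bfw$ and to $\mathbf{1}$, with one general hyperplane $H$ chosen for both, the defect is
\[
\DED(X)=(-1)^{\dim X}\Big(\chi\big(\Eu_X|_{\pp^n\setminus(Q_\bfw\cup H)}\big)-\chi\big(\Eu_X|_{\pp^n\setminus(Q\cup H)}\big)\Big).
\]
Set $U:=X\setminus H$. Intersecting with $U$, additivity of $\chi$ over the decomposition by membership in $Q_\bfw$ and $Q$ gives
\[
\chi(\Eu_X|_{U\setminus Q_\bfw})-\chi(\Eu_X|_{U\setminus Q})=\chi(\Eu_X|_{(U\cap Q)\setminus Q_\bfw})-\chi(\Eu_X|_{(U\cap Q_\bfw)\setminus Q}).
\]
So the task is to compare the Euler obstruction on the special member $X_0$ with that on the generic member $X_\infty$, both taken off the base locus $B$ and off $H$.

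To make the comparison I will use the pencil. Let $\widetilde U=\widetilde X\setminus p^{-1}(H)$, with projection $\pi:\widetilde U\to\pp^1$. The map $p$ is an isomorphism over $X\setminus B$, and $\Eu_{X\setminus Q_\bfw}=\Eu_X|_{X\setminus Q_\bfw}$ since Euler obstructions are local. On $U\setminus X_\infty$ the function $f$ coincides with $\pi$ in the affine coordinate $\lambda_1/\lambda_0$. For a small disc $D$ around $0$, the tube $\pi^{-1}(D)\setminus p^{-1}(B)$ is then a neighborhood of $(X_0\setminus Q_\bfw)\setminus H$ in $U\setminus X_\infty$.

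The key step is a global nearby-cycle identity: $\chi(\psi_f(\Eu)|_{X_0\setminus(Q_\bfw\cup H)})$ equals $\chi(\Eu_X|_{X_t\setminus(B\cup H)})$ for a nearby generic $t$. To get this I will apply the Euler-characteristic property $\chi(\psi_\pi\beta)=\chi(\beta|_{\pi^{-1}(t)})$. This requires properness, so I will work with $\beta=p^*\Eu_X\cdot 1_{\widetilde U\setminus p^{-1}(B)}$ on all of $\widetilde U$ and use that $\chi$ commutes with proper pushforward and with $\psi$. The difficulty is that $\pi|_{\widetilde U}$ is not proper, because $H$ has been removed.

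I expect the main obstacle to be showing that the boundary contributions from $H$ and $B$ do not change the count. These are $\psi_\pi$ of $\beta$ along points of $p^{-1}(H)\cap\widetilde X_0$, and $\psi_\pi(p^*\Eu_X)$ along $\{0\}\times B$. For generic $\bfw$, $Q_\bfw$ is transversal to all strata of $\XWhitney$. For generic $H$, $H$ is transversal to all strata of a stratification adapted to $X_0$, $B$ and the pencil. Under these conditions I would argue that near $\{0\}\times B$ and near $H$ the map $\pi$ is a stratified submersion, or a product in the $\lambda$-direction. This is a stratified Thom–Mather argument, as in the smooth case \cite[Theorem 3.2]{ED-degree-MR4136171}, now applied stratum by stratum. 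It makes $\psi_\pi$ there equal to restriction, so the vanishing cycles are supported on $X_0\setminus(Q_\bfw\cup H)$.

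Granting this, I also need that a general member of the pencil has the same weighted Euler characteristic off $B\cup H$ as $X_\infty$. This holds because $\bfw$ is generic, so I can replace $X_\infty$ by $X_t$. Then
\[
\chi(\Eu_X|_{X_\infty\setminus(B\cup H)})-\chi(\Eu_X|_{X_0\setminus(B\cup H)})=\chi\big(\varphi_f(\Eu_{X\setminus Q_\bfw})|_{X_0\setminus(Q_\bfw\cup H)}\big).
\]
Substituting into the expression for $\DED(X)$ gives the signs $(-1)^{\dim X}\cdot(-1)=(-1)^{\dim X-1}$, which is \eqref{th-eq}.
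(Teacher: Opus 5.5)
Your reduction to comparing $\chi(\Eu_X|_{X_\infty\setminus(B\cup H)})$ with $\chi(\Eu_X|_{X_0\setminus(B\cup H)})$ is correct, and the signs work out. Your treatment of the base locus also matches the paper: $\pi$ has no stratified critical points along $E$, so $\psi_\pi$ equals restriction there and kills the contribution of $\{0\}\times B$. This is exactly the input of Proposition \ref{p31}.

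The gap is at $H$. Because $\pi$ is not proper on $\widetilde U$, you must extend $\beta$ by zero across $p^{-1}(H)$. You then have to show that $\psi_\pi$ of this extension vanishes at the points of $p^{-1}(H)\cap\widetilde X_0$. You propose to get this from $\pi$ being a stratified submersion, or a product in $\lambda$, near $H$. That claim is false whenever the stratified critical locus $Z=X_0\cap\sing_{\XWhitney}(f)$ has positive dimension, which is precisely the case covered by positive-dimensional strata in Theorem \ref{thm-i1}. In that case a generic $H$ meets $Z$. At a point $x\in H\cap Z$, $d(f|_S)(x)=0$ on the stratum $S$ through $x$, and hence also on $S\cap H$. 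Moreover $\varphi_f(\Eu_{X\setminus Q_\bfw})$ is typically nonzero at such points, so there is no local product structure near $H$. Transversality of $H$ to the strata does not repair this.

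What you actually need is a non-characteristic base change. For generic $H$, nearby cycles commute with restriction: $\psi_f(\alpha)|_H=\psi_{f|_H}(\alpha|_H)$, which the paper takes from \cite[Lemma 4.3.4]{S}. Equivalently, $\chi(\alpha|_{F_x\setminus H})=0$ for the Milnor fiber $F_x$ of $f$ at any $x\in H\cap X_0$. With this in hand, your one-shot argument on the compact $\widetilde X$, with $\beta$ extended by zero, goes through.

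The paper avoids the non-properness differently. It proves $\chi(\Eu_X|_{X_\infty})-\chi(\Eu_X|_{X_0})=\chi(\varphi_f(\Eu_{X\setminus X_\infty}))$ on the full projective $\widetilde X$, where $\pi_U$ is proper. It applies the same lemma to $X\cap H$, uses the base change to identify $\varphi_{f^H}(\Eu_{X^H\setminus X^H_\infty})$ with $\varphi_f(\Eu_{X\setminus X_\infty})|_H$, and then subtracts. The base change is the essential ingredient for the hyperplane in either route, and it is the step your proposal misidentifies.
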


So the function $\varphi_{f} (\Eu_{X \setminus Q_\bfw})|_{X_0\setminus(Q_\bfw \cup H)}$ provides a constructible enhancement of the ED degree defect.

We further note that for generic weight $\bfw$, the constructible function $\varphi_{f} (\Eu_{X \setminus Q_\bfw})|_{X_0\setminus(Q_\bfw \cup H)}$ is in fact supported on the 
intersection of $f^{-1}(0)=X_0\setminus X_\infty$ with the stratified singular locus of $f$, i.e., on 
\[
X_0 \cap \sing_{\mathscr{X}}(f)\setminus (Q_\bfw \cup H),
\]
where $\mathscr{X}$ is a Whitney stratification of $X$ as before. Let
$$Z:=X_0 \cap \sing_{\mathscr{X}}(f),$$
and refine the stratification $\XWhitney$ within $Z$ in such a way that the  function $\varphi_{f} (\Eu_{X \setminus Q_\bfw})|_{X_0\setminus(Q_\bfw \cup H)}$ is constructible with respect to the new stratification. 
Denote by $\mathscr{X}_0$ the collection of the refined Whitney strata contained in~$Z$. For any stratum $V \in \mathscr{X}_0$, we denote by 
${\overline{V}}$ its closure.
Let $CF_{\mathscr{X}_0}(Z)$ denote the abelian group of $\mathscr{X}$-constructible functions on $X_0$ which are supported on $Z$. Then it is well-known that the collection $\{ {\rm Eu}_{{\overline{V}}} \mid V \in \mathscr{X}_0 \}$ is a basis of $CF_{\mathscr{X}_0}(Z)$. Expressing the constructible function \begin{equation}\label{mu} \mu:=\varphi_{f}(\Eu_{X \setminus Q_\bfw})|_{X_0\setminus(Q_\bfw \cup H)}\end{equation} of Theorem \ref{th-main} in the basis $\{ {\rm Eu}_{{\overline{V}} \setminus (Q_\bfw \cup H)} \mid V \in \mathscr{X}_0 \}$ of constructible functions with support on $Z \setminus (Q_\bfw \cup H)$
yields our second main result, which generalizes \cite[Theorem 1.5]{ED-degree-MR4136171} to the singular context:

\begin{theorem}\label{thm-i1}  
Let $X \subset \pp^n$ be an irreducible projective variety not contained in the isotropic quadric $Q$. Then, 
\begin{equation}\label{i3}
\DED(X)
=\sum_{V \in \mathscr{X}_0} (-1)^{\codim_{X_0}(V)} \alpha_V \cdot\GED({\overline{V}})
\end{equation}
with \begin{equation}\label{alv}
\alpha_V=\mu_V-\sum_{\{S \mid V \subset {\overline S}\}} \chi_c(L_{V,S}) \cdot \mu_S,\end{equation}
where, for any stratum $V\in \mathscr{X}_0$, $\mu_V=\mu(V)$ is the value of the constructible function $\mu$ of \eqref{mu} at a point $v\in V\setminus (Q_\bfw \cup H)$, the summation in \eqref{alv} is over strata $S$ in $Z$ different from $V$, which contain $V$ in their closure,
 and $L_{V,S}$ is the complex link\footnote{For the definition of the complex link $L_{V,S}$ of $V$ in $\overline S$ see, e.g.,  \cite[Page 15]{GM}.} of a pair of distinct strata $(V,S)$ with $V \subset {\overline S}$. \end{theorem}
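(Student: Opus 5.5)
Theorem \ref{th-main} already identifies the defect with the Euler characteristic of the constructible function $\mu$ of \eqref{mu}, up to the sign $(-1)^{\dim(X)-1}$. So the plan is to expand $\mu$ in the basis of Euler obstructions of the strata closures in $Z$, and to evaluate each basis term with Theorem \ref{thm2}.

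\textbf{Step 1 (change of basis).} Since $\mu$ is $\mathscr{X}_0$-constructible with support in $Z\setminus(Q_\bfw\cup H)$, write
\[
\mu=\sum_{V\in\mathscr{X}_0}\alpha_V\,\Eu_{\overline V\setminus(Q_\bfw\cup H)}.
\]
To find $\alpha_V$, evaluate both sides at a point $v\in V$. This gives the triangular system
\[
\mu_V=\alpha_V+\sum_{S\neq V,\;V\subset\overline S}\alpha_S\,\Eu_{\overline S}(v).
\]
Inverting it uses the standard inversion of the local Euler obstruction matrix, in the form $\Eu_{\overline S}(v)$ versus $\chi_c(L_{V,S})$. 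One can either quote the inverse-matrix identity (as in \cite{ED-degree-MR4136171}), or prove it by induction on strata dimension via the formula expressing $\Eu$ through complex links. The result is that $\alpha_V=\mu_V-\sum_S\chi_c(L_{V,S})\mu_S$, which is \eqref{alv}. A cleaner route may be to write the indicator function of $\overline S$ as $1_{\overline S}=\sum_V (\ldots)\Eu_{\overline V}$, using the known formula that involves $\chi_c$ of complex links; I expect this to be the main technical step, mostly a matter of sign conventions.

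\textbf{Step 2 (Euler characteristics).} Apply $\chi$ and linearity:
\[
\chi(\mu)=\sum_V\alpha_V\,\chi\big(\Eu_{\overline V}|_{\pp^n\setminus(Q_\bfw\cup H)}\big).
\]
Each $\overline V$ is an irreducible closed subvariety of $\pp^n$, and since $\bfw$ is generic, Theorem \ref{thm2} gives
\[
\chi\big(\Eu_{\overline V}|_{\pp^n\setminus(Q_\bfw\cup H)}\big)=(-1)^{\dim V}\,\GED(\overline V).
\]
Genericity must hold uniformly: $\mathscr{X}_0$ has finitely many strata, but they depend on $\bfw$ through $f$. To handle this, I would choose a generic point $\bfw$ of a parameter space over which the whole construction is stratified. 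Alternatively, I would use that $\EDdeg_\bfw(\overline V)$ equals $\GED(\overline V)$ whenever $Q_\bfw$ meets the strata of $\overline V$ transversally away from $Q$, and check this property directly. This genericity issue is the second delicate point.

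\textbf{Step 3 (signs).} Combine with Theorem \ref{th-main}. The total sign is $(-1)^{\dim X-1+\dim V}$. Since $X_0$ is a hypersurface section of $X$ not containing $X$, we have $\dim X_0=\dim X-1$, so this sign equals $(-1)^{\dim X_0-\dim V}=(-1)^{\codim_{X_0}V}$. This yields \eqref{i3}.
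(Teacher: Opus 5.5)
Your proposal is correct and follows the paper's proof essentially step for step: expand $\mu$ in the basis $\{\Eu_{\overline V\setminus(Q_\bfw\cup H)}\}$, invert the unitriangular transition matrix via Kashiwara's complex-link formula $b_{W,V}=-\chi_c(L_{W,V})$, evaluate each term with Theorem~\ref{thm2}, and combine with Theorem~\ref{th-main} using $\dim X_0=\dim X-1$. The genericity point you flag, which the paper passes over, is better settled as follows: along $f^{-1}(0)$ one has $df=df_0/f_\infty$, so $Z=\Sigma\setminus Q_\bfw$ with $\Sigma$ the $\bfw$-independent locus where $Q$ fails to be transversal to the strata of $X$, and near $Z$ the function $f$ is a local equation of $Q$ times a unit; hence the $\overline V$ can be fixed before choosing $\bfw$ generic, whereas a generic point of a parameter space would not by itself make $\bfw$ generic for a $\bfw$-dependent $\overline V$ (also, your condition ``transversally away from $Q$'' is vacuous, since $\overline V\subset Q$).
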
 

 \begin{corollary}\label{cor28}
     If $Z$ consists only of isolated stratified singularities of $f$, then  
     \begin{equation}\label{isp}
\DED(X)
=\sum_{P \in Z} (-1)^{\dim(X)-1} \mu_P,
\end{equation}
where $\mu_P=\mu(P)=\varphi_{f}(\Eu_{X \setminus Q_\bfw})(P)$.
 \end{corollary}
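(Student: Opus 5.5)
The corollary should follow directly from Theorem \ref{thm-i1} once we identify the strata and their contributions in the isolated case; I will also sketch a check via Theorem \ref{th-main}. If $Z$ consists only of isolated stratified singular points of $f$, then $Z$ is a finite set of points. The refined collection $\mathscr{X}_0$ of strata contained in $Z$ may therefore be taken to be the point strata $\{P\}$, $P\in Z$. For a point stratum $V=\{P\}$ no other stratum $S\subset Z$ contains $V$ in its closure, since all strata in $Z$ are points. Hence the sum in \eqref{alv} is empty and $\alpha_{\{P\}}=\mu_P$. One caveat concerns points of $Z$ lying in $Q_\bfw\cup H$. Since $H$ is generic, it avoids the finitely many points of $Z$. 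Since $\mu$ is only defined off $Q_\bfw$, and $f$ is only defined on $X\setminus X_\infty$, $Z$ is understood inside $X_0\setminus Q_\bfw$. So every $P\in Z$ contributes, with $\mu_P=\varphi_f(\Eu_{X\setminus Q_\bfw})(P)$.

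Next I compute the remaining factors in \eqref{i3} for $V=\{P\}$. The closure $\overline{V}=\{P\}$ is a point in $\pp^n$. Its affine cone is a line through the origin in $\cc^{n+1}$, not contained in $Q_\bfw$, and $\GED(\{P\})=1$. The quickest way to see this is Theorem \ref{thm2}: $(-1)^0\chi(\{P\}\setminus(Q_\bfw\cup H))=1$, because $P\notin Q_\bfw\cup H$. Alternatively, the squared weighted distance restricted to a non-isotropic line has exactly one critical point. For the sign, $\codim_{X_0}(\{P\})=\dim X_0$. Since $X\not\subset Q$, $X_0=X\cap Q$ has pure dimension $\dim X-1$, being a hypersurface section of an irreducible variety. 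Hence $(-1)^{\codim_{X_0}(V)}=(-1)^{\dim X-1}$. Substituting these into \eqref{i3} gives \eqref{isp}.

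As a sanity check, Theorem \ref{th-main} gives the same formula directly. The function $\mu$ is supported on the finite set $Z$, with $H$ generic missing it. So $\chi(\mu)=\sum_{P\in Z}\mu_P$, and \eqref{th-eq} yields $(-1)^{\dim X-1}\sum_P\mu_P$. This route bypasses the codimension bookkeeping entirely, so I would present it as the main proof and mention Theorem \ref{thm-i1} only as a consistency check. The only mildly delicate points are the genericity statement that $H$ misses $Z$ and the pure-dimensionality of $X_0$. I expect neither to be a real obstacle.
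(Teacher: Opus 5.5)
Your argument is correct and is essentially what the paper has in mind. The paper states the corollary without a separate proof, as the direct specialization of Theorem \ref{thm-i1} (equivalently, of Theorem \ref{th-main}) to the case where every stratum in $\mathscr{X}_0$ is a point: then $\alpha_{\{P\}}=\mu_P$, $\GED(\{P\})=1$, and the sign is $(-1)^{\dim(X)-1}$. Your preferred route through Theorem \ref{th-main} is equally valid and skips the codimension bookkeeping; it only uses that $\mu$ is supported on the finite set $Z\subset X_0\setminus Q_\bfw$, which a generic $H$ misses.
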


As in \cite[Corollary 1.10]{ED-degree-MR4136171}, we also have the following result motivated by the duality conjecture of \cite[Equation (3.5)]{OSS2014} in structured low-rank approximation.   
\begin{corollary}[Intersection with linear space]\label{cor-slice}  
With the notations as in Theorem~\ref{thm-i1},
let $\cL$ denote a general linear subspace of $\pp^n$.
Then
\begin{equation}
\DED(X\cap\cL)
=\sum_{V \in \mathscr{X}_0} (-1)^{\codim_{X_0} V} \alpha_V \cdot\GED({\overline{V}}\cap \cL).
\end{equation}
\end{corollary}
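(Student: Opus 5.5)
We will deduce Corollary~\ref{cor-slice} by applying Theorem~\ref{thm-i1} to the variety $X\cap\cL$ in place of $X$, and then identifying every ingredient of the resulting formula with the corresponding ingredient for $X$ intersected with $\cL$. By induction on codimension it suffices to treat a general hyperplane $\cL$. So let $\cL$ be a general hyperplane, chosen general with respect to the finitely many data already fixed: the Whitney stratification $\mathscr{X}$ together with its refinement $\mathscr{X}_0$ along $Z$, the quadrics $Q$ and $Q_\bfw$, and the general hyperplane $H$.

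\emph{Step 1: strata and genericity.} By the stratified Bertini/transversality theorem, $\cL$ is transverse to every stratum of $\mathscr{X}$, and also to every stratum of the induced stratification of $X\cap Q_\bfw$ and of the refinement $\mathscr{X}_0$. Hence $\{S\cap\cL\}$ is a Whitney stratification of $X\cap\cL$. Moreover $\bfw$ remains generic for $X\cap\cL$, in the sense that $Q_\bfw$ is transverse to all strata $S\cap\cL$. We also use that $X\cap\cL$ is irreducible when $\dim X\ge 2$ and is not contained in $Q$. If instead $\dim(X\cap\cL)=0$, then both sides reduce to counts of points and can be checked directly.

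\emph{Step 2: comparison of vanishing cycles.} This is the key step. We need
\[
\Eu_{X\cap\cL}=\Eu_X|_{\cL}
\qquad\text{and}\qquad
\varphi_{f|_{X\cap\cL}}\bigl(\Eu_{(X\cap\cL)\setminus Q_\bfw}\bigr)=\varphi_f\bigl(\Eu_{X\setminus Q_\bfw}\bigr)|_{\cL}.
\]
The first identity is the standard behaviour of the local Euler obstruction under transversal slicing. For the second we argue as follows.
\begin{itemize}
\item $\cL$ is transverse to the strata of $\mathscr{X}$ and to the strata of the constructible sheaf/function $\mu$.
\item A generic slice through a point of a stratum $V$ is a normal-slice-preserving section.
\item Therefore the Milnor fiber of $f|_{X\cap\cL}$ at $x\in V\cap\cL$ is a transversal slice of the Milnor fiber of $f$ at $x$.
\end{itemize}
I would prove this through the characteristic-cycle description: $\varphi_f$ corresponds to the microlocal operation on conormal cycles, and non-characteristic restriction to $\cL$ commutes with both $\varphi_f$ and $CC$. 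This holds because a general $\cL$ is non-characteristic for all conormal varieties involved, including those to the strata of $\sing_{\mathscr{X}}(f)$ and of $Z$. Consequences:
\begin{itemize}
\item $\sing(f|_{X\cap\cL})=\sing_{\mathscr{X}}(f)\cap\cL$, so $Z_{X\cap\cL}=Z\cap\cL$ with strata $V\cap\cL$ for $V\in\mathscr{X}_0$.
\item The values satisfy $\mu_{V\cap\cL}=\mu_V$.
\item The complex links agree: $L_{V\cap\cL,S\cap\cL}\simeq L_{V,S}$, since the complex link is computed in a normal slice, and $\cL$ preserves normal slices.
\end{itemize}
It follows that $\alpha_{V\cap\cL}=\alpha_V$.

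\emph{Step 3: assembling the formula.} Codimensions are preserved, $\codim_{X_0\cap\cL}(V\cap\cL)=\codim_{X_0}(V)$, and $\overline{V\cap\cL}=\overline V\cap\cL$. Strata $V$ with $V\cap\cL=\emptyset$, i.e.\ points when $\cL$ is a hyperplane, contribute $0$ on both sides, because $\GED$ of the empty set is $0$. Substituting all of this into \eqref{i3} for $X\cap\cL$ gives exactly the claimed formula.

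The main obstacle is Step 2, namely showing that vanishing cycles of $\Eu$ commute with a generic linear section. I would first try the Milnor-fiber slice argument, using transversality to $\sing_{\mathscr{X}}(f)$ and the local product structure along strata, and fall back on the non-characteristic restriction theorem for $\varphi_f$ if that argument does not go through.
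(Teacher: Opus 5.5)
Your proposal is correct and is essentially the argument the paper has in mind. The paper gives no separate proof and defers to the smooth case \cite[Corollary 1.10]{ED-degree-MR4136171}: apply Theorem~\ref{thm-i1} to $X\cap\cL$ and check that a general section preserves the strata, $\Eu$, the values $\mu_V$, and the complex links, so the $\alpha_V$ are unchanged; your key identity is exactly the base change the paper already uses in \eqref{eq8}. One minor correction: $\sing(f|_{X\cap\cL})$ can be strictly larger than $\sing_{\mathscr{X}}(f)\cap\cL$, since new polar critical points appear, but a dimension count shows none of them lie on $X_0$ for general $\cL$, so $Z_{X\cap\cL}=Z\cap\cL$ still holds; alternatively, you can skip complex links altogether by restricting \eqref{e1} to $\cL$ via $\Eu_{\overline V}|_{\cL}=\Eu_{\overline V\cap\cL}$, then applying Theorem~\ref{th-main} to $X\cap\cL$ and \eqref{e2} to each $\overline V\cap\cL$.
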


\begin{remark}
    When $X$ is smooth, we have that
    \[\mu_V=\chi(\widetilde{H}^*(F_{V};\bQ))\]
    is the Euler characteristic of the reduced cohomology of the Milnor fiber $F_{V}$ of the hypersurface $X_0 \subset X$ at some point in $V$, e.g., it coincides with the Milnor number if $V$ is an isolated singular point. So in this case the ED degree defect is computed by \eqref{i3} only in terms of Milnor fiber and complex link information, thus recovering \cite[Theorem 1.5]{ED-degree-MR4136171}.
\end{remark}

\medskip

An additional goal of this paper is to provide the tools to compute defects of singular models in applications. For instance, results in the previously mentioned paper \cite{ED-degree-MR4136171} were used to prove results on the defect of smooth rank one tensors and matrices. A first obstruction to obtain results on the defect of higher rank matrices was the omission of a topological formula for the ED degree defect of singular models. Our \Cref{th-main} and \Cref{thm-i1}  fill this void.

\medskip
The remainder of our paper is organized as follows. 
Proofs of our main results  \Cref{th-main} and \Cref{thm-i1} will be given in Section \ref{proofs-sec}. Several concrete examples are worked out in Section \ref{ex-sec} and additional computations of defects are discussed in \Cref{s:aag}.

%%%%%%%%%%%%%%

\section{Proofs and discussion of main results}\label{proofs-sec}

In this section we provide
the proofs of our main results, Theorems \ref{th-main}
and \ref{thm-i1}.
We also discuss computational aspects of these results.

\subsection{Proof of the topological formula for ED degree~defect}

Let $M$ be a smooth complex projective variety, and let $L$ be a line bundle on $M$. Suppose that $s_0, \dots, s_n\in H^0(M, L)$ span a base-point-free linear system. For a general choice of $\mathbf{a}=(a_0, \dots, a_n)\in \C^{n+1}$, let $B_{\mathbf a}$ be the base locus (as a subscheme of $M$) of the pencil determined by the sections $s_0$ and $a_0s_0+\cdots+ a_ns_n$, i.e.,
\[
B_{\mathbf a}=\{s_0=a_0s_0+\cdots+ a_ns_n=0\}\subset M.
\]
Let $p: \widetilde{M}\to M$ be the blowup of $M$ along $B_{\mathbf a}$ with exceptional divisor $E$. Then we have a pencil 
\[
\pi \coloneqq [a_0s_0+\cdots+ a_ns_n : s_0]: \widetilde{M}\to \mathbb{P}^1.
\]
Consider $\infty=[1:0]\in \mathbb{P}^1$, and $\C=\mathbb{P}^1\setminus \{\infty\}$. Let $U=\pi^{-1}(\C)$, and denote by $\pi_U: U\to \C$ the restriction of $\pi$.

Next, we give a more general formulation of \cite[Proposition A.1]{LMW25}, while the proof remains identical.
\begin{proposition}\label{p31}
    Fix a Whitney stratification $\mathcal{W}$ of $M$. Then, for a general $\mathbf{a}=(a_0, \dots, a_n)\in \C^{n+1}$ and under the above notations, the following properties hold. 
    \begin{enumerate}
        \item The pullback of $\mathcal{W}$ by $p$ is a Whitney stratification of $\widetilde{M}$, which we denote by $\mathcal{W}'$. 
        \item The map $\pi_U: U\to \C$ does not have critical points along $E$ with respect to the Whitney stratification $\mathcal{W}'$.
    \end{enumerate}
\end{proposition}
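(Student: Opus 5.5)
Both assertions are local on $M$, and the genericity of $\mathbf a$ will come from a Bertini--Kleiman type transversality argument over the parameter space $\C^{n+1}$. The key input is that $s_0,\dots,s_n$ span a base-point-free linear system, so the morphism $\phi=[s_0:\cdots:s_n]\colon M\to \pp^n$ is defined everywhere. Under $\phi$, the zero locus of $a_0s_0+\cdots+a_ns_n$ is the pullback of the hyperplane $H_{\mathbf a}=\{a_0y_0+\cdots+a_ny_n=0\}$. Likewise, $B_{\mathbf a}$ is the pullback of the codimension-two linear space $\Lambda_{\mathbf a}=\{y_0=0\}\cap H_{\mathbf a}$.

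For part (1), I first stratify $\pp^n$ so that $\phi$ is a stratified map: refine $\mathcal W$ if necessary, and use Thom--Mather genericity for the restriction of $\phi$ to each stratum. Since the hyperplane $\{y_0=0\}$ is fixed, I also refine so that $\{s_0=0\}$ is a union of strata. For general $\mathbf a$, Kleiman transversality (the group of projective transformations fixing $\{y_0=0\}$ still acts transitively on hyperplanes different from it) shows that the hyperplane $\{a_0s_0+\cdots+a_ns_n=0\}$ meets every stratum of $\mathcal W$ and every stratum of $\mathcal W\cap\{s_0=0\}$ transversally. Consequently $B_{\mathbf a}$ is a complete intersection that is stratified-transverse, in the sense that each $S\cap B_{\mathbf a}$ is smooth of codimension two in $S$. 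Locally along $B_{\mathbf a}$, the pair $(M,B_{\mathbf a})$ then looks, stratum by stratum, like a product $S\times \C^2\supset S\times\{0\}$ in suitable coordinates $(g_0,g_1)=(s_0,\sum a_is_i)$ forming part of a chart. The blowup is locally $S\times \mathrm{Bl}_0\C^2$ over the stratum. The pulled-back strata are $p^{-1}(S\setminus B_{\mathbf a})$ and $p^{-1}(S\cap B_{\mathbf a})\cong (S\cap B_{\mathbf a})\times\pp^1$. Whitney's conditions (a) and (b) are preserved under such locally trivial, transverse blowups, because $p$ restricted to a neighborhood of $E$ is a submersion-like family that is trivial in the stratum directions. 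This is where I follow \cite[Proposition A.1]{LMW25} verbatim, the only change being that arbitrary Whitney strata replace the smooth ones.

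For part (2), I work near a point of $E\cap U$. In local coordinates on a stratum $S'=p^{-1}(S\cap B_{\mathbf a})$ of $E$, the map $\pi$ is the $\pp^1$-coordinate $[g_1:g_0]$. On $U$ it is the affine coordinate $t=g_1/g_0$ of the blowup chart. Restricted to $(S\cap B_{\mathbf a})\times\pp^1$, the map $\pi$ is projection to the second factor, hence a submersion. Restricted to an open stratum $p^{-1}(S\setminus B_{\mathbf a})$ near $E$, the map $\pi$ is, under the local product structure, the map $(s,z)\mapsto [z_1:z_0]$ on $S\times(\C^2\setminus 0)$, which is also a submersion. Hence $\pi_U$ has no stratified critical points along $E$.

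The main obstacle is justifying the local product structure in part (1) when $M$ is only stratified, not analytically a product near $B_{\mathbf a}$. I would handle it by applying Thom's first isotopy lemma to the stratified submersion $(g_0,g_1)$ near $B_{\mathbf a}$, which is a submersion on strata by the transversality above. This yields a stratified topological, and in fact Whitney-preserving, trivialization over a small disc in $\C^2$. Whitney regularity of the pulled-back strata then follows by checking conditions (a) and (b) in the blowup charts, where $p$ is the identity on the $S$-directions and the standard blowup on the $\C^2$-directions.
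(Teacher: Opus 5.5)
\textbf{The gap is in part (1).} It sits at the step ``each $S\cap B_{\mathbf a}$ is smooth of codimension two in $S$'' and the local model built on it, namely $S\times\C^2\supset S\times\{0\}$ with $(s_0,\sigma)$, $\sigma:=a_0s_0+\cdots+a_ns_n$, forming part of a chart on each stratum.

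Genericity of $\mathbf a$ only controls the moving member $\{\sigma=0\}$. Your model needs $(s_0,\sigma)|_S$ to be a submersion along $S\cap B_{\mathbf a}$, and that also requires the \emph{fixed} member $\{s_0=0\}$ to be transverse to $S$ at the base points. This fails whenever the locus where $\{s_0=0\}$ is tangent to, or contains, a stratum is positive dimensional: for ample $L$, as in the application, a general $\{\sigma=0\}$ meets that locus. This is exactly the case of interest, since $s_0$ is the isotropic quadric.

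Your own refinement rules the model out. Once $\{s_0=0\}$ is a union of strata, every stratum meeting $B_{\mathbf a}$ lies in $\{s_0=0\}$. So $s_0|_S\equiv 0$, and $S\cap B_{\mathbf a}=S\cap\{\sigma=0\}$ has codimension one. No stratum then has $(s_0,\sigma)$ as a submersion along $B_{\mathbf a}$, so the Thom isotopy step has nothing to apply to.

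Separately, the first isotopy lemma only gives a stratum-preserving homeomorphism. That cannot transport the $C^1$ conditions (a) and (b). In the genuinely transverse case you would instead use that $\widetilde M$ is locally the transverse fibre product $M\times_{\C^2}\mathrm{Bl}_0\C^2$.

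\textbf{Some compatibility with $\{s_0=0\}$ is really needed, and it costs something.} Take $M=\pp^3$, $L=\mathcal O(2)$, $\mathcal W=\{M\}$, $s_0=x_1x_2$, and $s_1,\dots,s_n$ completing it to a basis of quadrics. Then $B_{\mathbf a}$ is two conics meeting in two nodes. Over a node $q$, $\widetilde M$ is locally $\{\lambda_0xy=\lambda_1z\}\subset\C^3\times\pp^1$, so $E$ is singular and the unrefined pullback is not a stratification. After refining by the two planes, the stratum $\{q\}\times\pp^1$ passes through the ordinary double point $xy=uz$ of $\widetilde M$ at $\pi=\infty$. Whitney (a) fails there for the open stratum: along $x=y=z=u=\epsilon$ the tangent hyperplanes are $dx+dy-dz-du=0$, which do not contain $\partial_u$. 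So (1) should be proved over $U$, which is all the paper uses.

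\textbf{The right model on $U$} is the hypersurface $U=\{(x,t)\in M\times\C:\ \sigma(x)=t\,s_0(x)\}$. Assume $\{s_0=0\}$ is a union of strata and $\{\sigma=0\}$ is transverse to all strata. Then $G=\sigma-ts_0$, restricted to each $S\times\C$, is a submersion along $G^{-1}(0)$:
\begin{itemize}
\item if $s_0(x)\neq0$, use $\partial_tG=-s_0(x)$;
\item if $s_0(x)=0$, then $S\subset\{s_0=0\}$ and $dG|_{T_xS}=d\sigma|_{T_xS}\neq 0$.
\end{itemize}
Hence the stratification of $G^{-1}(0)$ induced from $\mathcal W\times\C$ is Whitney, and it coincides with the pullback on $U$. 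Its strata along $E\cap U$ are $(S\cap B_{\mathbf a})\times\C$, on which $\pi_U=t$ is a projection; this gives (2).

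Your argument for (2) is fine once these are the strata. The remark about open strata is unnecessary. Under your refinement it is also wrong: for $S\subset\{s_0=0\}$, $p^{-1}(S\setminus B_{\mathbf a})$ lies in $\pi^{-1}(\infty)$, where $\pi$ is constant.
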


\begin{remark}\label{r32}
    In this paper we work with a projective variety $X \subset \pp^n$. In order to apply the above proposition to the possibly singular variety $X$ it suffices to work with a Whitney stratification $\mathcal{W}$ of $M=\pp^n$ so that $X$ is a union of strata. We let $L=\mathcal{O}_{\pp^n}(2)$, and $s_0=x_0^2+\ldots + x_n^2$, together with $s_i=x_i^2$ for $i=1,\ldots,n$. We then have that $p^{-1}(X)=\widetilde{X}$ is a union of strata of $\mathcal{W}'$, and we also denote by $\pi$ the restriction of the pencil to $\widetilde{X}$. Moreover, the restriction of $\pi_U$ to $U\setminus E$ can be identified  with the function $f:X\setminus X_\infty \to \C$ introduced in the earlier sections, since the varieties $U\setminus E=\widetilde{X} \setminus (E \cup \widetilde{X}_{\infty})$ and $X\setminus X_\infty$ are isomorphic under $p$.
    \hfill$\diamond$
\end{remark}

\begin{proof}[Proof of Theorem \ref{th-main}]
Choose $H \subset \pp^n$ a generic hyperplane.
The definition of weighted Euler characteristic  together with formulae (\ref{eq2}) and (\ref{eq4}) yield:
\begin{equation}\label{eq5}
\begin{split}
& \UED(X)  - \GED(X) =(-1)^{\dim (X)}	
\left[ \chi(\Eu_{X \setminus (Q \cup H)}) - \chi(\Eu_{X \setminus (Q_\bfw \cup H)})\right] \\
&= (-1)^{\dim (X)}	
\left[ \chi(\Eu_X\vert_{X_\infty \cup X^H}) - \chi(\Eu_X\vert_{X_0 \cup X^H})\right] \\
&= (-1)^{\dim (X)}	
\left[(\chi(\Eu_X\vert_{X_\infty}) - \chi(\Eu_X\vert_{X_0})) - (\chi(\Eu_X\vert_{X^H_\infty}) - \chi(\Eu_X\vert_{X^H_0})\right].
\end{split}	
\end{equation}

We'll need the following.
\begin{lemma}\label{baseloc}
    In the above notations,
\begin{equation}\label{eq6} \chi(\Eu_X\vert_{X_\infty}) - \chi(\Eu_X\vert_{X_0}) = \chi(\varphi_{f} (\Eu_{X \setminus X_\infty})).  
\end{equation}
\end{lemma}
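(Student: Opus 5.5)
We will prove \eqref{eq6} by moving to the incidence variety $\widetilde{X}$ and comparing the fibers $\widetilde{X}_0$ and $\widetilde{X}_\infty$ of $\pi$ through nearby cycles. Put $\widetilde{\alpha}:=p^*\Eu_X$. This is a constructible function on $\widetilde{X}$, and it is constant along the strata of the pulled-back stratification $\mathcal{W}'$ given by Proposition \ref{p31} and Remark \ref{r32}. Since $p$ identifies $\widetilde{X}_\lambda$ with $X_\lambda$ isomorphically, $\chi(\Eu_X\vert_{X_\lambda})=\chi(\widetilde{\alpha}\vert_{\widetilde{X}_\lambda})$ for $\lambda=0,\infty$. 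So it suffices to show
\[
\chi(\widetilde{\alpha}\vert_{\widetilde{X}_\infty})-\chi(\widetilde{\alpha}\vert_{\widetilde{X}_0})=\chi(\varphi_f(\Eu_{X\setminus X_\infty})).
\]

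First, the generic fiber. Because $\mathbf{w}$ is generic, $\pi$ is stratified submersive near $\widetilde{X}_\infty$: transversality of $Q_\mathbf{w}$ takes care of points off $E$, and Proposition \ref{p31} takes care of points on $E$. The same reasoning applies to every $\lambda$ outside a finite set, so $\pi_*\widetilde{\alpha}$ is locally constant near $\infty$. In particular, for $t\in\C$ close to $0$ and generic, $\chi(\widetilde{\alpha}\vert_{\widetilde{X}_\infty})=\chi(\widetilde{\alpha}\vert_{\widetilde{X}_t})$. The reason is that $\chi(\widetilde{\alpha}\vert_{\widetilde{X}_\lambda})=(\pi_*\widetilde{\alpha})(\lambda)$ is constant on the Zariski-open set of regular values, and that set contains both $\infty$ and the generic $t$.

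Second, the special fiber. Since $\pi$ is proper, $\chi$ of the nearby cycles $\psi_{\pi}(\widetilde{\alpha})$ on $\widetilde{X}_0$ equals $\chi(\widetilde{\alpha}\vert_{\widetilde{X}_t})$; this is the compatibility of $\psi$ with proper pushforward. Hence
\[
\chi(\widetilde{\alpha}\vert_{\widetilde{X}_\infty})-\chi(\widetilde{\alpha}\vert_{\widetilde{X}_0})=\chi(\varphi_\pi(\widetilde{\alpha})).
\]

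Third, localization. The function $\varphi_\pi(\widetilde{\alpha})$ is supported on the stratified critical locus of $\pi$ inside $\widetilde{X}_0$. By Proposition \ref{p31}(2), that critical locus misses $E$. Thus $\varphi_\pi(\widetilde{\alpha})$ is supported in $\widetilde{X}_0\setminus E$, which $p$ maps isomorphically onto $X_0\setminus X_\infty$. Vanishing cycles are local, and on $U\setminus E\cong X\setminus X_\infty$ the map $\pi$ becomes $f$ (Remark \ref{r32}) while $\widetilde{\alpha}$ becomes $\Eu_{X\setminus X_\infty}$, since the Euler obstruction is local. Therefore $\chi(\varphi_\pi(\widetilde{\alpha}))=\chi(\varphi_f(\Eu_{X\setminus X_\infty}))$, which proves \eqref{eq6}.

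The main obstacle is the localization away from $E$. At a point of $B\cap X$ the base locus is blown up, and we need the nearby fiber of $\pi$ there to be trivial, so that $\varphi_\pi(\widetilde{\alpha})$ vanishes at that point. This holds only because $E$ contains no stratified critical points of $\pi$, and that is exactly the content of Proposition \ref{p31}. A secondary point is that $\psi_\pi$ commutes with $\chi$ under proper maps; I would state this as the standard fact $\chi(\psi_g\beta)=\chi(\beta\vert_{g^{-1}(t)})$ for proper $g$ and small generic $t$.
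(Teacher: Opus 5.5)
Your proof follows the paper's route step for step: lift to $\widetilde X$, use properness of $\pi$ to pass to $\psi_\pi$, then discard $E$ and identify $\pi$ with $f$. Your first step also usefully justifies the paper's bare assertion $X_\infty\cong X_\lambda$. The problem is the localization step.

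With the data of Remark~\ref{r32}, $s_0$ is the isotropic quadric, so the set $U=\pi^{-1}(\PP^1\setminus\{[1:0]\})$ in Proposition~\ref{p31} is the complement of the \emph{isotropic} fiber. Part (2) therefore speaks about $E$ near $\widetilde X_\infty$. That is how you use it in your first step, and there the claim is true, since $f_\infty$ is a transverse local coordinate. It says nothing about $E\cap\widetilde X_0$. The paper invokes it at the same place through the identification in Remark~\ref{r32}, so you inherit this.

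The statement you draw from it, that the stratified critical locus of $\pi$ in $\widetilde X_0$ misses $E$, is false in general. Suppose the locus where $Q$ fails to be transverse to the strata of $X$ is positive-dimensional. Then a generic $Q_\bfw$ meets it at some point $b$. Stratified critical loci are closed (Whitney (a)), and $(b,0)\in E$ is a limit of critical points $(z,0)$, so it is critical too.

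Concretely, take the smooth quadric $X=\{x_0^2+x_1^2+x_2^2+(1-c)x_3^2=0\}$. On it $f_0|_X=c\,x_3^2$, so $X_0$ is a double conic $C$. Pick any of the four points $b\in C\cap Q_\bfw$ and take local coordinates $(u,h)$ on $X$ at $b$, with $u$ cutting out $C$ and $h$ a local equation of $Q_\bfw$. In these coordinates $\widetilde X$ is $\{c\,u^2=t\,h\}$, a surface node, hence a point stratum of every Whitney stratification.

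What is true, and all the argument needs, is the weaker statement that $\varphi_\pi(\widetilde\alpha)$ vanishes at the points $(b,0)$ with $b\in X\cap Q\cap Q_\bfw$. This requires its own local argument:
\begin{itemize}
\item The Milnor fiber of $\pi$ at $(b,0)$ is $\{f_0=\epsilon f_\infty\}\cap X\cap B_\delta(b)$.
\item Take a Whitney stratification of $X$ in which $X_0$ is a union of strata and which satisfies Thom's $a_{f_0}$ condition. For generic $\bfw$, $Q_\bfw$ misses its point strata and is transverse to the others.
\item Then $f_0$ is topologically trivial along the stratum through $b$, with $f_\infty$ as one of the base coordinates. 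So the Milnor fiber is a graph over the tube $\{|f_0|<\epsilon r\}$ in a normal slice, times a disk.
\item That tube retracts onto the conic set $f_0^{-1}(0)$ in the slice. Hence the $\widetilde\alpha$-weighted Euler characteristic is $\Eu_X(b)$, and $\varphi_\pi(\widetilde\alpha)(b,0)=0$.
\end{itemize}
In the example, the Milnor fiber at the node is the contractible graph $h=c\,u^2/\epsilon$. Consistently, $\chi(X_\infty)-\chi(X_0)=0-2=-2=\chi(C)-4=\chi(\varphi_f(1))$, since $\varphi_f(1)$ takes the value $1$ on $C$ minus the four points.

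Genericity is essential here. At an isolated critical point lying on $Q_\bfw$, for instance $f_0=y_1^2+y_2^2$ and $f_\infty=y_1$, the Milnor fiber is an annulus and the value is $-1$.
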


\begin{proof}[{Proof of lemma.}] First note that  for generic $\lambda$ near the origin of $\C$ we have $X_\infty \cong X_{\lambda} \cong \widetilde{X}_{\lambda}$. Moreover, if $i_{\lambda}:X_{\lambda} \hookrightarrow X$ and $\widetilde{i}_{\lambda}:\widetilde{X}_{\lambda} \hookrightarrow \widetilde{X}$ denote the closed inclusions for ${\bs} \in \C$, then under the isomorphism $X_{\lambda} \cong \widetilde{X}_{\lambda}$, the map $i_{\lambda}$ can be identified with the composition $p\circ \widetilde{i}_{\lambda}$. With these identifications and recalling the notations from the beginning of this section (and Remark \ref{r32}), we have
\[
\begin{split}
    \chi(\Eu_X\vert_{X_\infty}) - \chi(\Eu_X\vert_{X_0})&=\chi(p^*(\Eu_X)\vert_{\widetilde{X}_{\lambda}}) - \chi(p^*(\Eu_X)\vert_{\widetilde{X}_0})\\
    &\overset{(a)}=\chi(\psi_{\pi_U}(p^*(\Eu_X)\vert_U)) - \chi(p^*(\Eu_X)\vert_{\widetilde{X}_0}) \\
    &\overset{(b)}=\chi(\varphi_{\pi_U}(p^*(\Eu_X)\vert_U)),
\end{split}
\]
with $p^*(\Eu_X)=\Eu_X \circ p \in CF(\widetilde{X})$ denoting the pullback of the constructible function $\Eu_X$ under $p$. Here, the equality $(a)$ uses the properness of $\pi_U$ and and \cite[Example 10.4.20]{MS22}, and $(b)$ follows from the definition of vanishing cycles. Next note that by Proposition \ref{p31}, the support of $\varphi_{\pi_U}(p^*(\Eu_X)\vert_U)$ does not intersect the exceptional divisor $E$, hence 
\[
\chi(\varphi_{\pi_U}(p^*(\Eu_X)\vert_U))=\chi(\varphi_{\pi_U\vert_{U \setminus E}}(p^*(\Eu_X)\vert_{U\setminus E}))=\chi(\varphi_{f} (\Eu_{X \setminus X_\infty})),
\]
where the first equality uses the fact that vanishing cycles commute with open
embeddings (cf. also \cite[Proposition 10.4.19(2)]{MS22}), and the second equality uses the identifications of Remark \ref{r32}.
\end{proof}

Applying Lemma \ref{baseloc} to the restriction to a generic hyperplane section $X^H:=X \cap H$ of $X$, and working with the pencil $X^H_{\lambda}:=X_{\lambda} \cap H$ on $X^H$ and the restricted function $f^H:=f|_H$, one gets similarly that
\begin{equation}\label{eq7}
	\chi(\Eu_X\vert_{X^H_\infty}) - \chi(\Eu_X\vert_{X^H_0})=\chi(\varphi_{f^H} (\Eu_{X^H \setminus X^H_{\infty}})).
\end{equation}
Using the base change isomorphism of \cite[Lemma 4.3.4]{S}, we also have that 
\begin{equation}\label{eq8}
\varphi_{f^H} (\Eu_{X^H \setminus X^H_{\infty}})=\varphi_{f^H} (\Eu_{X \setminus X_{\infty}}\vert_{H})
 = \varphi_f (\Eu_{X \setminus X_{\infty}}) |_H.	
\end{equation}
Substituting the identities (\ref{eq6}), (\ref{eq7}), and (\ref{eq8}) in (\ref{eq5}) we get
\begin{equation*}
\begin{split}	
 \UED(X) - \GED(X) &=
(-1)^{\dim (X)} \left[ 
\chi(\varphi_f (\Eu_{X \setminus X_{\infty}}))-
\chi(\varphi_{f} (\Eu_{X \setminus X_{\infty}})|_H) \right] \\
&= (-1)^{\dim (X)} \chi\left(\varphi_{f} (\Eu_{X \setminus X_{\infty}})|_{X_0\setminus(X_{\infty} \cup X^H)}\right),
\end{split}
\end{equation*}
where the last equality uses the fact that $H$ is generically chosen.
\end{proof}

\begin{proof}[Proof of Theorem \ref{thm-i1}]
This is similar to the proof given in the smooth case  in \cite{ED-degree-MR4136171}; it also follows by combining \cite[equation (3.2)]{ED-degree-MR4623843} with \cite[Theorem 3.2]{ED-degree-MR4623843}. In a nutshell, the proof amounts to expressing the constructible function 
\[ \mu:=\varphi_{f}(\Eu_{X \setminus X_\infty})|_{X_0\setminus(X_\infty \cup H)}
\]
of \eqref{mu} in terms of the basis 
\[
B_1:=\{ {\rm Eu}_{{\overline{V}} \setminus (X_\infty \cup H)} \mid V \in \mathscr{X}_0 \}
\]
of constructible functions with support on $Z \setminus (X_\infty \cup H)$. We include the details of this calculation below.

Recall that $\mu$ is supported on  $(X_0 \cap \sing_{\mathscr{X}}(f))\setminus (X_\infty \cup H),$ for $\mathscr{X}$ the Whitney stratification of $X$  as before, and we denoted by $\mathscr{X}_0$ the collection of (refined) Whitney strata of $\mathscr{X}$ which are contained in
$$Z:=X_0 \cap \sing_{\mathscr{X}}(f).$$

For each $V \in \mathscr{X}_0$, the corresponding integer $\alpha_V$ appearing in formula \eqref{i3} is defined uniquely by the expression of $\mu$ in the basis $B_1$, that is,
\begin{equation}\label{e1}
    \mu=\sum_{V \in \mathscr{X}_0} \alpha_V \cdot \Eu_{\overline{V}\setminus (X_\infty \cup H)}.
\end{equation}
Furthermore, by applying Theorem \ref{thm2} to the irreducible variety $\overline{V}$ and a generic weight $\bfw$, we get that
\begin{equation}\label{e2}
    \GED(\overline{V})= (-1)^{\dim (V)} \chi( \Eu_{\overline{V}\setminus (X_\infty \cup H)}).
\end{equation}
 
By combining Theorem \ref{th-main} with \eqref{e1} and \eqref{e2}, we get that
\begin{equation}\begin{split}
\DED(X)
	&= 	(-1)^{\dim (X)-1} \chi\big(\mu\big) \\
    &= \sum_{V \in \mathscr{X}_0} (-1)^{\dim(X)-\dim(V)-1}  \alpha_V \cdot \GED({\overline{V}}).
    \end{split}
	\end{equation}

In order to find the desired formula for $\alpha_V$, let us also express the constructible function $\mu$ in terms of the standard basis 
\[
B_2:=\{ {1}_{V \setminus (X_\infty \cup H)} \mid V \in \mathscr{X}_0 \}
\]
of constructible functions with support on $Z \setminus (X_\infty \cup H)$. In other words, we have
\begin{equation}\label{e3}
\mu=\sum_{V \in \mathscr{X}_0} \mu_V \cdot {1}_{V \setminus (X_\infty \cup H)},
\end{equation}
with $\mu_V=\mu(V)$ for each $V \in \mathscr{X}_0$. 
Evaluating \eqref{e1} and \eqref{e3} at a point $w\in W \setminus (X_\infty \cup H)$, for $W \in \mathscr{X}_0$, yields the equality
\begin{equation}\label{e4}
    \mu_W=\sum_{\{V \mid W \subset {\overline V}\}} \alpha_V \cdot \Eu_{\overline{V}\setminus (X_\infty \cup H)}=\sum_{\{V \mid W \subset {\overline V}\}} \alpha_V \cdot a_{W,V},
\end{equation}
where the integers $a_{W,V}$ are defined by the identity
\begin{equation}
\Eu_{\overline{V}\setminus (X_\infty \cup H)}=\sum_{W \subset {\overline V}} a_{W,V}\cdot 1_{W \setminus (X_\infty \cup H)}. 
\end{equation}
Note that the matrix $A=(a_{W,V})_{W, V\in \mathscr{X}_0}$ is a transition matrix between the bases $B_1$ and $B_2$ mentioned above, and it is an upper-triangular matrix with all diagonal entries equal to $1$. A standard inversion formula applied to \eqref{e4} then yields that
\begin{equation}\label{e5}
    \alpha_W=\sum_{\{V \mid W \subset {\overline V}\}} b_{W,V}\cdot \mu_V,
\end{equation}
with $B=(b_{W,V})_{W, V\in \mathscr{X}_0}$ the transition matrix from $B_2$ to $B_1$, i.e., the inverse of $A$. Then $B$ is also upper triangular with all diagonal entries equal to $1$, and a well-known result of Kashiwara computes the non-zero off-diagonal entries of $B$ in terms of the topology of complex links of pairs of strata. Specifically, for a pair of distinct strata $(W,V)$ in $\mathscr{X}_0$ with $W\subset \overline{V}$ one has that 
\[
b_{W,V}=-\chi_c(L_{W,V}),
\]
with $\chi_c$ the compactly supported Euler characteristic and $L_{W,V}$ the complex link of the pair of strata $(W,V)$. Altogether, for $W \in \mathscr{X}_0$, we get 
\begin{equation}\label{e5}
    \alpha_W=\sum_{\{V \mid W \subset {\overline V}\}} b_{W,V}\cdot \mu_V = \mu_W - \sum_{\{V \mid W \subset {\overline V} \setminus V\}} \chi_c(L_{W,V})\cdot \mu_V,
\end{equation}
which completes the proof of the theorem.
\end{proof}

\subsection{Computational aspects of our main results}

As we explain next,
the integers $\mu_V:=\mu(V)$ appearing in the statement of Theorem \ref{thm-i1} can be computed in terms of the values of $\Eu_X$ at points in strata of $X$ containing $V$ in their closure. Let us start by noting that we have by definition:
\[
\varphi_{f}(\Eu_{X \setminus X_\infty})=\psi_{f}(\Eu_{X \setminus X_\infty})-(\Eu_{X \setminus X_\infty})\vert_{X_0 \setminus X_\infty}.
\]
Restricting further to $X\setminus(X_\infty \cup H)$ and evaluating at $v\in V\setminus(X_\infty \cup H)$, for $V \in \mathscr{X}_0$, yields:
\begin{equation}\label{def2}
\mu_V=\psi_{f}(\Eu_{X \setminus X_\infty})(v)-\Eu_{X \setminus X_\infty}(v),\end{equation}
with $\psi_{f}:CF(X\setminus X_\infty) \to CF(X_0 \setminus X_\infty)$ the corresponding nearby cycle functor for the function $f$. Both terms on the right-hand side of \eqref{def2} can be expressed as a weighted sum over strata $S\in \mathscr{X}$ different from $V$ and containing $V$ in their closure. Indeed, for a positive dimensional stratum $V \in \mathscr{X}_0$,  let $N_V$ be a general linear subspace of codimension $\dim V$, which meets $V$ transversally at the point $v\in V\setminus(X_\infty \cup H)$, and set $X_V:=X \cap N_V$ and $f_V:=f\vert_{N_V}$. Then, as in  \cite[Section 3]{BLS},
\begin{equation}\label{def4} \Eu_{X\setminus X_\infty}(v)=\Eu_{X_V\setminus X_\infty}(v),\end{equation}
and, similarly, by \cite[Lemma 4.3.4]{S},
\begin{equation}\label{def5} \psi_{f}(\Eu_{X\setminus X_\infty})(v)=\psi_{f_V}(\Eu_{X_V\setminus X_\infty})(v).\end{equation}
Next, by applying \cite[Theorem 3.1]{BLS} (see also \cite{S}) to the germ $(X_V,v)$ with the induced stratification (in which $v$ is a zero-dimensional stratum), and using \eqref{def2}, \eqref{def4} and \eqref{def5}, we can write\footnote{Note that the positive dimensional strata in a stratification of $(X_V,v)$ are of the form $S \cap N_V$, with $S$ strata of $X$, and hence in one-to-one correspondence with the strata $S$ of $X$ different from $V$, and containing $V$ in their closure.}:
\begin{equation}\label{def3}
\mu_V=\sum_{S} 
\left[    
 \chi(f^{-1}({\lambda}) \cap B_{\epsilon}(v) \cap S) 
 - \chi(l^{-1}(\delta) \cap B_{\epsilon'}(v) \cap S) 
\right] \cdot \Eu_X(s),
\end{equation}
where the sum is over strata $S$ of $X$ with $V \subset \overline{S}\setminus S$ and $s\in S$ is any point in the stratum. Moreover, $\epsilon, \epsilon'>0$ are sufficiently small so that $0<\vert {\lambda} \vert \ll \epsilon$, $0<\vert \delta \vert  \ll \epsilon'$, and $B_\epsilon(v)$, $B_{\epsilon'}(v)$ are balls of radius $\epsilon$, resp., $\epsilon'$ centered at $v$ in the ambient space $\bP^n$. Finally, $l$ is a locally defined (near $v$) generic linear function.

The expression \eqref{def3} shows that the negative of the integer $\mu_V$ matches what, in \cite[Section 5]{BMPS}), was called the ``defect'' of the function $f$ on $X$ at some point of $V$.

\medskip

\subsection{Isolated singularities}
    Let us now consider the special situation when the projective variety $X$ of dimension $k\geq 1$ has only isolated singularities $p_1,\ldots, p_{s+r}$, with $s,r\in \mathbb{Z}$ satisfying $s\geq 1$ and $r \geq 0$. Assume, moreover, that $f$ has only isolated stratified singularities at the points $p_1,\ldots, p_{s} \in X \cap Q$. A Whitney stratification of $X$ can be given with a dense $k$-dimensional stratum $S_k=X\setminus \{p_1,\ldots, p_{s+r}\}$ (on which $\Eu_X$ takes the value $1$) and zero-dimensional strata $S_0^i=\{p_i\}$, $i=1,\ldots, s+r$. Moreover, our assumptions on the singularities of $f$ imply that $Z=\{p_1,\ldots, p_{s}\}$. It then follows from Corollary \ref{cor28} and formula \eqref{def3} that, in the notations above, we have 
    \begin{equation}\label{def_iso}
        \DED(X)
=\sum_{i=1}^s (-1)^{k-1} \cdot \left[ 
 \chi(f^{-1}({\lambda}) \cap B_{\epsilon}(p_i)) 
 - \chi(l^{-1}(\delta) \cap B_{\epsilon'}(p_i)) 
\right].
    \end{equation}
    Let us also note that $F_{f,p_i}:=f^{-1}({\lambda}) \cap B_{\epsilon}(p_i)$ is just the Milnor fiber of $f$ at $p_i$ (and $f$ can be replaced here by any local representative of $f_0$ near $p_i$), while $l^{-1}(\delta) \cap B_{\epsilon'}(p_i)$ coincides with the complex link of $p_i$ in $X$.

%%%%%%%%%%%%%%%%%%%%%%%%%%%%%%

%\newpage

\section{Examples}\label{ex-sec}
In this section, we apply the main results  \Cref{th-main} and \Cref{thm-i1} to a pair of concrete examples.

\begin{example}\label{example nodal surface}
    Let $X$ be the complex projective surface 
    \[
    X=\left\{[x_0:x_1:x_2:x_3]\in \PP^3: x_1x_2+\sqrt{-1}x_0x_2=x_3^2\right\}. 
    \]
The projective variety $X$ has an isolated singularity at $$p=[1: \sqrt{-1}:0:0],$$ 
which lies
on the isotropic quadric. 
The intersection of $X$ and the isotropic quadric $Q$ is transversal everywhere except at $p$. 
Thus, we will compute $\DED(X)$ using formula \eqref{def_iso}.

First, we dehomogenize and consider the affine chart $x_0\neq 0$. The affine equation of $X$ is
\[
(z_1+\sqrt{-1})z_2=z_3^2. 
\]
On this affine chart, 
\[
f=\frac{1+z_1^2+z_2^2+z_3^2}{a_0+a_1z_1^2+a_2z_2^2+a_3z_3^2}
\]
for some general $a_i\in \C$ with $0\leq i\leq 3$. 

Define the map $\theta: \C^2\to X$ by
\[
\theta: \C^2\to X, \quad (u, v)\mapsto [1:u^2-\sqrt{-1}:v^2:uv].
\]
This is a double cover of the above affine open set of $X$, which is only ramified at the singular point 
$p$.
Thus, to compute the Euler characteristics of the local analytic sets $f^{-1}({\bs}) \cap B_{\epsilon}(p)$ and $l^{-1}(\delta) \cap B_{\epsilon'}(p)$ as in formula \eqref{def_iso}, we can compute the Euler characteristic of their preimages in $\C^2$ via $\theta$, and then divide by $2$. Note that none of these two sets contains the singular point $p$, and hence their preimages in $\C^2$ are unramified double covers. 

Since $\theta^{-1}(B_\epsilon(p))$ is a small regular neighborhood of the origin of $\C^2$, we see that $\theta^{-1}(f^{-1}({\bs}) \cap B_{\epsilon}(p))$ is equal to the local Milnor fiber of $f\circ \theta$ at the origin. Since
\begin{align*}
f\circ \theta(u,v)&=\frac{1+(u^2-\sqrt{-1})^2+(v^2)^2+(uv)^2}{a_0+a_1(u^2-\sqrt{-1})^2+a_2(v^2)^2+a_3(uv)^2}\\
&=\frac{u^4-2\sqrt{-1}u^2+v^4+u^2v^2}{a_0+a_1(u^2-\sqrt{-1})^2+a_2(v^2)^2+a_3(uv)^2},
\end{align*}
and
\[
\frac{1}{a_0+a_1(u^2-\sqrt{-1})^2+a_2(v^2)^2+a_3(uv)^2}
\]
is an invertible function near $p$, the Milnor number of $f\circ \theta$ at the origin can be computed algebraically and is equal to $3$. Therefore, the Milnor fiber of $f\circ \theta$ at the origin is homotopy equivalent to the wedge of 3 circles. Therefore,
\[
\chi\big(\theta^{-1}\big(f^{-1}({\bs}) \cap B_{\epsilon}(p)\big)\big)=-2,
\]
and
\[
\chi\big(f^{-1}({\bs}) \cap B_{\epsilon}(p)\big)=-1.
\]
Similarly, we can compute
\[
\chi\big(l^{-1}(\delta) \cap B_{\epsilon'}(p)\big)=0.
\]
Therefore, by \eqref{def_iso}, 
\[
\DED(X)=1. 
\]

For this example we find  
\[\GED(X)=10,\quad \UED(X)=9.\]
One way to compute these numbers using symbolic computation over $\mathbb{Q}$ is to consider the product of the conjugates 
\[x_0^2x_2^4+x_1^2x_2^4-2x_0x_1x_2^2x_3^2+x_0^2x_3^4.\]
The GED and UED of this new hypersurface is $20$ and $18$, respectively. Since the ED degree is additive over irreducible components, the result follows.
\end{example}

\begin{example}\label{ex:more-complicated-whitney-umbrella}    
    Let us consider the surface defined by
\[
X=\left\{
[x_0:x_1:x_2:x_3]\in \PP^3:
(x_1+\sqrt{-1}x_0)^2x_0=x_2^2x_3\right\}.
\]
The singular locus of $X$ is $x_1+\sqrt{-1}x_0=x_2=0$. Near $p_1\coloneqq [1:-\sqrt{-1}:0:0]$ and $p_2\coloneqq [0:0:0:1]$, $X$ has the singularity of a Whitney umbrella. 
A Whitney stratification of $X$ is therefore given by
\[
X=X_\textrm{reg}\sqcup \{x_1+\sqrt{-1}x_0=x_2=0, x_1+\sqrt{-1}x_0\neq 0, x_3\neq 0\}\sqcup \{p_1\}\sqcup \{p_2\}.
\]
All strata intersect the isotropic quadric $Q$ transversally, except $p_1$. 
Therefore, the support of the constructible function $\varphi_{f} (\Eu_{X \setminus Q_\bfw})$ in Theorem \ref{th-main} is either $p_1$ or empty. Hence, by Theorem \ref{th-main}, we have
\begin{align*}
    \DED(X)&=-\varphi_{f} (\Eu_{X \setminus Q_\bfw})(p_1)\\
    &=-\chi(\Eu_{X}|_{f^{-1}({\bs}) \cap B_{\epsilon}(p_1)})+\Eu_X(p_1).
\end{align*}
Denote by $X^{\mathrm{aff}}$ the subset of $X$ where $x_0\neq 0$. Then, $X^{\mathrm{aff}}$ is the affine variety defined by $(z_1+\sqrt{-1})^2=z_2^2z_3$.
Similar to Example \ref{example nodal surface}, we define a map 
\[
\xi: \C^2\to X^{\mathrm{aff}},\quad (u, v)\mapsto (uv-\sqrt{-1}, u, v^2).
\]
Since, as is well-known, the value of $\Eu_{X^{\mathrm{aff}}}$ is $1$ along the smooth locus of $X^{\mathrm{aff}}$, it is $2$ along $\{z_1+\sqrt{-1}=z_2=0, z_3\neq 0\}$, and is $1$ at $(-\sqrt{-1}, 0, 0)$, we get
\[
\Eu_{X^{\mathrm{aff}}}=\xi_*(1_{\C^2}).
\]
Therefore, 
\[
\chi(\Eu_{X}|_{f^{-1}({\bs}) \cap B_{\epsilon}(p_1)})=\chi\big(\xi^{-1}\big(f^{-1}({\bs}) \cap B_{\epsilon}(p_1)\big)\big).
\]
Similar to the argument in Example \ref{example nodal surface}, $\xi^{-1}\big(f^{-1}({\bs}) \cap B_{\epsilon}(p_1)\big)$ is homeomorphic to the local Milnor fiber of $f\circ \xi: \C^2\to \C$ at the origin. On the affine chart $x_0\neq 0$, 
\[
f=1+z_1^2+z_2^2+z_3^2.
\]
Hence, 
\[
f\circ \xi=1+(uv-\sqrt{-1})^2+u^2+v^4=u^2v^2-2\sqrt{-1}uv+u^2+v^4.
\]
By algebraic computation, we know that the Milnor number of $f\circ \xi$ at the origin is equal to $1$. Thus, the local Milnor fiber of $f\circ \xi$ at the origin is homotopy equivalent to a circle. Hence,
\[
\chi(\Eu_{X}|_{f^{-1}({\bs}) \cap B_{\epsilon}(p_1)})=\chi\big(\xi^{-1}\big(f^{-1}({\bs}) \cap B_{\epsilon}(p_1)\big)\big)=0.
\]
Therefore, 
\[
\DED(X)=\Eu_X(p_1)=1.
\]

We confirm this calculation using symbolic computation by finding the generic and unit ED degree of the reducible variety defined by 
\[
g=\prod_{j=1}^2\left(x_1+(-1)^j\sqrt{-1}x_0)^2x_0-x_2^2x_3\right).
\]
The polynomial $g$ is the product of complex conjugates, and 
we find that the generic and unit ED degrees are  $10+10$ and $9+9$, respectively. 
\end{example}

%%%%%%%%%%%%%%%%%%%%%%%%%%%%%%
\newpage

\section{Computational challenges motivated by symbolic and numerical methods }\label{s:aag}

In this section we highlight calculations that motivate conjectures on Euclidean distance degrees and their defects in applications. 
All computations are based on a combination of symbolic computation using \texttt{Macaulay2}~\cite{M2} and numerical homotopy continuation~\cite{bertinibook}. The methods rely on genericity assumptions for randomly chosen parameters and on predictor-corrector path tracking, which can occasionally fail due to path crossing or truncation. All reported values were verified across multiple runs.
The code to reproduce all examples is available at this GitHub repository for the \texttt{Macaulay2}~\cite{M2} package \texttt{EuclideanDistanceDegree}
\begin{quote}
\url{https://github.com/JoseMath/EuclideanDistanceDegree}\,.
\end{quote}

\newcommand{\hadamard}{\star}

    \subsection{Structured low rank matrices}

We consider examples of $m \times n$ structured low rank matrices. Define 
$X_{m,n,r} \subset \PP^{mn-1}$ to be the projective variety of matrices of rank at most $r$:
    \[
    X_{m,n,r}:=
    \left\{
    \begin{bmatrix}
        x_{11}&\dots&x_{1n}\\
        \vdots&     &\vdots\\
        x_{m1}&\dots&x_{mn}
        \end{bmatrix}\in \PP^{mn-1}:
            \rank\left([x_{i,j}]_{m\times n}\right)\leq r  
    \right\}.
    \]

    \newcommand{\linear}{\mathcal{L}}
    
    Let $\linear \subset \PP^{mn-1}$ be a coordinate linear subspace, i.e., a linear space defined by setting a subset of the coordinates equal to zero. Then the intersection $\linear \cap X_{m,n,r}$ consists of matrices of rank at most $r$
    with a prescribed sparsity pattern given by $\linear$. 
    In general, such intersections need not be irreducible.
    
    When there is no sparsity constraint, 
    the Eckart-Young-Mirsky Theorem computes
    the unit ED degree of $X_{m,n,r}$ by the following closed form formula:
    \[\UED(X_{m,n,r})=\binom{\min\{m,n\}}{r}.\] This is presented in \cite{DHOST} along 
    with a connection to singular value decomposition (SVD) 
    and a discussion of $\GED(X_{m,n,r})$.  In this setting, SVD also provides the global optimal solution to the ED optimization problem:
    \[
    \min_{x\in X_{m,n,r}\cap \mathbb{R}^{m\times n}} \Vert x- u\Vert_F^2,\quad u\in \mathbb{R}^{m\times n},
    \]
    where $\Vert x-u\Vert_F:= \sqrt{\sum_{i=1}^m\sum_{j=1}^n (x_{ij}-u_{ij})^2 }$ 
    denotes the Frobenius norm on matrices. 
    However, once the sparsity constraint is introduced,  minimizing the distance function on $\linear\cap X_{m,n,r}$
    is well-known to be difficult~\cite{KST2022-exact-zeros-ed-degree,OSS2014}.
    We report computations of the GED, UED,  and ED degree defect for such examples.

\begin{example}
Let $X_1$ be the subvariety of $\PP^8$ given by the determinant of 
$\begin{bsmallmatrix}
    x_{0}& x_{1} & x_{2}\\
    x_{3}& x_{4} & x_{5}\\
    x_{6}& x_{7} & x_{8}    
\end{bsmallmatrix}$.
Next, let $X_2=X_1\cap V(x_0)$. That is, we are considering rank at most two matrices where one of the entries is zero. 
Then \[(\UED(X_2),\,\GED(X_2),\,\DED(X_2))=(8,\, 36,\, 28).\]

The singular locus of $X_2$ is of degree seven in $\PP^8$.
It consists of three components. 
The first two are obtained by intersecting a component of the singular locus of $X_1$ with $V(x_0)$. Namely, the first two components are
\begin{multline*}
V(x_6,x_3,x_0,x_5x_7-x_4x_8,x_2x_7-x_1x_8,x_2x_4-x_1x_5),\\
V(x_2,x_1,x_0,x_5x_7-x_4x_8,x_5x_6-x_3x_8,x_4x_6-x_3x_7).
\end{multline*}
The third irreducible component in the singular locus of $X_2$ is the linear space
$V(x_6,x_3,x_2,x_1,x_0)$.
Despite knowing this, 
applying the formulas in~\Cref{ss:statement-results} remains a challenge. The main difficulty is 
understanding how the isotropic quadric intersects $X_2$. 
This motivates the study of computing Whitney stratifications of projective varieties intersected with isotropic quadrics, as well as computing the values of the local Euler obstruction along the strata.
\end{example}

\begin{comment}
i30 : Here is the decomposed singular locus of F
Count: 0
ideal(x_6,x_3,x_2,x_1,x_0)
(Codim, degree) => (5, 1)

Count: 1
ideal(x_6,x_3,x_0,x_5*x_7-x_4*x_8,x_2*x_7-x_1*x_8,x_2*x_4-x_1*x_5)
(Codim, degree) => (5, 3)
Index: (1,0)
ideal(x_6,x_3,x_0,x_7^2+x_8^2,x_5*x_7-x_4*x_8,x_4*x_7+x_5*x_8,x_2*x_7-x_1*x_8,x_1*x_7+x_2*x_8,x_4^2+x_5^2,x_2*x_4-x_1*x_5,x_1*x_4+x_2*x_5,x_2^2+x_5^2+x_8^2,x_1*x_2+x_4*x_5+x_7*x_8,x_1^2-x_5^2-x_8^2)
(Codim2, degree2) => (7, 4)

Count: 2
ideal(x_2,x_1,x_0,x_5*x_7-x_4*x_8,x_5*x_6-x_3*x_8,x_4*x_6-x_3*x_7)
(Codim, degree) => (5, 3)
Index: (2,0)
ideal(x_2,x_1,x_0,x_5*x_7-x_4*x_8,x_6^2+x_7^2+x_8^2,x_5*x_6-x_3*x_8,x_4*x_6-x_3*x_7,x_3*x_6+x_4*x_7+x_5*x_8,x_5^2+x_8^2,x_4*x_5+x_7*x_8,x_3*x_5+x_6*x_8,x_4^2+x_7^2,x_3*x_4+x_6*x_7,x_3^2-x_7^2-x_8^2)
(Codim2, degree2) => (7, 4)

\end{comment}

\begin{example}
The next example we consider is $X_3=X_2\cap V(x_8)$. That is rank at most two matrices of the form $\begin{bsmallmatrix}
    0 & x_1& x_2\\
    x_3& x_4& x_5\\
    x_6& x_7& 0   
\end{bsmallmatrix}$.
Then \[(\UED(X_3),\,\GED(X_3),\,\DED(X_3))=(8, 36, 28).\]

The singular locus of $X_3$ has six components. Each one is a surface in $\PP^8$. The first four are linear spaces parameterized as follows

\[
\left\{
\begin{bsmallmatrix}
    0& a_0& 0\\
    a_1& a_2& 0\\
    0& 0& 0
\end{bsmallmatrix} \in \PP^8: [a_0:a_1:a_2]\in \PP^2
\right\},
\qquad
\left\{
\begin{bsmallmatrix}
    0& 0& 0\\
    a_0& a_1& a_2\\
    0& 0& 0
\end{bsmallmatrix} \in \PP^8: [a_0:a_1:a_2]\in \PP^2
\right\},
\]
\[
\left\{
\begin{bsmallmatrix}
    0& 0& 0\\
    0& a_0& a_1\\
    0& a_2& 0
\end{bsmallmatrix} \in \PP^8: [a_0:a_1:a_2]\in \PP^2
\right\},
\qquad
\left\{
\begin{bsmallmatrix}
    0& a_0& 0\\
    0& a_1& 0\\
    0& a_2& 0
\end{bsmallmatrix} \in \PP^8: [a_0:a_1:a_2]\in \PP^2
\right\}.
\]
The remaining two components are embeddings of $2\times2$ matrices with rank at most one and are parameterized by $\PP^1\times \PP^1$:
\[
\left\{
\begin{bsmallmatrix}
    0& a_0b_0& a_0b_1\\
    0& a_1b_0& a_1b_1\\
    0& 0& 0
\end{bsmallmatrix} \in \PP^8: ([a_0:a_1],[b_0:b_1])\in \PP^1\times \PP^1
\right\}\]

\[\left\{
\begin{bsmallmatrix}
    0& 0&   0\\
    a_0b_0& a_0b_1& 0\\
    a_1b_0& a_1b_1& 0
\end{bsmallmatrix} \in \PP^8: ([a_0:a_1],[b_0:b_1])\in \PP^1\times \PP^1
\right\}.
\]

Each of the last two components intersected with 
the isotropic quadric is a quartic curve with four singular points where the two by two block of nonzero entries takes the form
\[\begin{bsmallmatrix}
    1& \sqrt{-1}\\
    \pm \sqrt{-1} & 1
\end{bsmallmatrix} 
\text{ or }
\begin{bsmallmatrix}
    1& \sqrt{-1}\\
    \sqrt{-1} & \pm 1
\end{bsmallmatrix}.\]
The main challenge in this example is computing the value of the local Euler obstruction function at these singular points.
\end{example}

\begin{comment}
    """
i38 : isNewExample F
Here is the decomposed singular locus of F
Count: 0
ideal(x_8,x_7,x_6,x_5,x_2,x_0)
(Codim, degree) => (6, 1)
Count: 1
ideal(x_8,x_7,x_6,x_3,x_0,x_2*x_4-x_1*x_5)
(Codim, degree) => (6, 2)
Index: (1,0)
ideal(x_8,x_7,x_6,x_3,x_2-x_4,x_1+x_5,x_0,x_4^2+x_5^2)
(Codim2, degree2) => (8, 2)
Index: (1,1)
ideal(x_8,x_7,x_6,x_3,x_2+x_4,x_1-x_5,x_0,x_4^2+x_5^2)
(Codim2, degree2) => (8, 2)
Count: 2
ideal(x_8,x_7,x_6,x_2,x_1,x_0)
(Codim, degree) => (6, 1)
Count: 3
ideal(x_8,x_6,x_5,x_3,x_2,x_0)
(Codim, degree) => (6, 1)
Count: 4
ideal(x_8,x_6,x_3,x_2,x_1,x_0)
(Codim, degree) => (6, 1)
Count: 5
ideal(x_8,x_5,x_2,x_1,x_0,x_4*x_6-x_3*x_7)
(Codim, degree) => (6, 2)
Index: (5,0)
ideal(x_8,x_5,x_4-x_6,x_3+x_7,x_2,x_1,x_0,x_6^2+x_7^2)
(Codim2, degree2) => (8, 2)
Index: (5,1)
ideal(x_8,x_5,x_4+x_6,x_3-x_7,x_2,x_1,x_0,x_6^2+x_7^2)
(Codim2, degree2) => (8, 2)
"""
\end{comment}

These examples illustrate that ED degree defects can arise even under simple sparsity constraints. 
Important potential applications of our defect formula include the duality conjecture of \cite[Equation~(3.5)]{OSS2014},
as well as extending the results of 
\cite{kozhasov2025minimalalgebraiccomplexityrankone} on the minimal algebraic complexity to higher rank matrices.

\subsection{Intersecting toric hypersurfaces and the sphere}

Toric varieties appear across applications~\cite[Chapter 8]{MS2021-invitation} such as phylogenetics, reaction networks, and algebraic statistics. One reason for their prevalence is because many models are given by monomial maps
naturally leading to a toric variety.

In this section we consider examples of ED degrees of toric hypersurfaces and their restrictions to the sphere. 
One way that these varieties arise is by homogenizing an affine toric variety and then restricting to the sphere.
A known result concerns the ED degree of hypersurfaces defined by a sparse polynomial (i.e., with prescribed monomial support and generic coefficients)~\cite{ED-degree-MR4509115}: 
in this setting the ED degree is given by the mixed volume of the associated Lagrange multiplier system. 
When an irreducible hypersurface is defined by a
binomial, it is a toric hypersurface.

\newcommand{\sphere}{X_{\operatorname{sphere}}}

We provide a table of computations for binomial hypersurfaces and for their intersection with 
$\sphere = V(x_0^2-x_1^2-x_2^2-\cdots-x_n^2)$.
We take $x^\alpha$ to be $x_1^{\alpha_1}\cdots x_n^{\alpha_n}$ for $\alpha\in \mathbb{N}^{n}$ and $|\alpha|$ to be the sum of the entries. 
The ED degree defect of the binomial hypersurface 
$V(x_0^{|\alpha|}-x^\alpha)$ is always zero; this follows from~\cite{ED-degree-MR3789441}. 
However, a general formula for the UED of toric varieties is not available, even though a combinatorial formula for the GED was given in~\cite{ED-degree-MR3789441}. 
Thus, in view of our main results 
    a computational challenge is to find combinatorial formulas for the ED degree defect of both toric varieties and their intersections with the sphere.

\begin{table}[hbt!]
    \centering
    \begin{tabular}{c|c|c|c}
         $\alpha\in \mathbb{N}^n$ & $V\left(x_0^{|\alpha|}-x^\alpha\right)$ & $\sphere\bigcap V\left(x_0^{|\alpha|}-x^\alpha\right) $ 
         & Method\\ 
         & GED=UED& (GED, UED, Defect) &\\
         %\toprule
         (1, 2, 1)& 16%(16, 16, 0)
         & (32, 24, 8)&\texttt{symbolic}\\
         (1, 1, 1)& 12%(12, 12, 0)
         & (24, 18, 6)&\texttt{symbolic} \\
         %(2, 2, 2) & (24, 24, 0) & (48,36,12)\\
         %&  & \\
(0,0,1,2) & 6%(6,6,0)
& (30,\,12,\,18)&\texttt{symbolic}\\
(0,1,1,2) & 16%(16,16,0) 
& (64,\,32,\,32)&\texttt{numerical}\\ %Numerical
(0,0,1,1) & 4%(4,4,0) 
& (20,\,8,\,12)&\texttt{numerical}\\ %Numerical
(1,1,1,2) & 40%(40,40,0) 
& (120,\,70,\,50)&\texttt{numerical}\\ %Numerical
% (0,1,1,2) & 16%(16,16,0) 
% & (64,32,32)&\texttt{numerical}\\ % Numerical
% (0,0,1,1) & 4%(4,4,0) 
% & (20,8,12)&\texttt{numerical}\\ %Numerical
(0,1,2,2) & 20%(20,20,0) 
& (80,\,40,\,40)&\texttt{numerical}\\ %Numerical

\end{tabular}
    \caption{Each triple in the third column reports the GED, UED and ED degree defect of the variety. 
    {Irreducibility requires $
\gcd\bigl(|\alpha|,\alpha_1,\dots,\alpha_n\bigr)=1.$}
    }
    \label{tab:placeholder}
\end{table}

%\bibliographystyle{siam-no-dash-title-color-links-colorfield}  
% \bibliography{REF_ED_Degree,refs,Bibs-MSN/ED-degree}

\begin{thebibliography}{10}

\bibitem{AH}
{\sc P.~Aluffi and C.~Harris}, {\em \bibTitleColor{The {E}uclidean distance degree of smooth complex projective varieties}}, Algebra Number Theory 12 (2018), no. 8, 2005--2032.
\newblock \bibClickableStyle{\href{https://doi.org/10.2140/ant.2018.12.2005}{\texttt{DOI}}}.

\bibitem{bertinibook}
{\sc D.~J. Bates, J.~D. Hauenstein, A.~J. Sommese, and C.~W. Wampler}, \bibTitleColor{{\em Numerically solving polynomial systems with {B}ertini}}, 
 Software, Environments, and Tools, 25. Society for Industrial and Applied Mathematics (SIAM), Philadelphia, PA, 2013.
\newblock \bibClickableStyle{\href{https://doi.org/10.1137/1.9781611972702.ch1}{\texttt{DOI}}}.

\bibitem{BLS}
{\sc J.-P. Brasselet, D.~T. L\^{e}, and J.~Seade}, {\em \bibTitleColor{Euler obstruction and indices of vector fields}}, Topology 39 (2000), no. 6, 1193--1208.
\newblock \bibClickableStyle{\href{https://doi.org/10.1016/S0040-9383(99)00009-9}{\texttt{DOI}}}.

\bibitem{BMPS}
{\sc J.-P. Brasselet, D.~Massey, A.~J. Parameswaran, and J.~Seade}, {\em \bibTitleColor{Euler obstruction and defects of functions on singular varieties}}, J. London Math. Soc. (2) 70 (2004), no. 1, 59--76.
\newblock \bibClickableStyle{\href{https://doi.org/10.1112/S0024610704005447}{\texttt{DOI}}}.

\bibitem{ED-degree-MR4509115}
{\sc P.~Breiding, F.~Sottile, and J.~Woodcock}, {\em \bibTitleColor{Euclidean distance degree and mixed volume}}, Found. Comput. Math. 22 (2022), no. 6, 1743--1765.
\newblock \bibClickableStyle{\href{https://doi.org/10.1007/s10208-021-09534-8}{\texttt{DOI}}}.

\bibitem{Dim04}
{\sc A.~Dimca}, \bibTitleColor{{\em Sheaves in topology}}, Universitext, Springer-Verlag, Berlin, 2004.
\newblock \bibClickableStyle{\href{https://doi.org/10.1007/978-3-642-18868-8}{\texttt{DOI}}}.

\bibitem{DHOST}
{\sc J.~Draisma, E.~Horobe\c{t}, G.~Ottaviani, B.~Sturmfels, and R.~R. Thomas}, {\em \bibTitleColor{The {E}uclidean distance degree of an algebraic variety}}, Found. Comput. Math. 16 (2016), no. 1, 99--149.
\newblock \bibClickableStyle{\href{https://doi.org/10.1007/s10208-014-9240-x}{\texttt{DOI}}}.

\bibitem{GM}
{\sc M.~Goresky and R.~MacPherson}, \bibTitleColor{{\em Stratified {M}orse theory}}, 
 Ergebnisse der Mathematik und ihrer Grenzgebiete (3), 14. Springer-Verlag, Berlin, 1988.
\newblock \bibClickableStyle{\href{https://doi.org/10.1007/978-3-642-71714-7}{\texttt{DOI}}}.

\bibitem{M2}
{\sc D.~R. Grayson and M.~E. Stillman}, {\em \bibTitleColor{Macaulay2, a software system for research in algebraic geometry}}.
\newblock \bibClickableStyle{\href{https://macaulay2.com/}{\texttt{URL}}}.

\bibitem{ED-degree-MR3789441}
{\sc M.~Helmer and B.~Sturmfels}, {\em \bibTitleColor{Nearest points on toric varieties}}, Math. Scand. 122 (2018), no. 2, 213--238.
\newblock \bibClickableStyle{\href{https://doi.org/10.7146/math.scand.a-101478}{\texttt{DOI}}}.

\bibitem{kozhasov2025minimalalgebraiccomplexityrankone}
{\sc K.~Kozhasov, A.~Muniz, Y.~Qi, and L.~Sodomaco}, {\em \bibTitleColor{On the minimal algebraic complexity of the rank-one approximation problem for general inner products}}, Math. Comp.,  (2025).
\newblock \bibClickableStyle{\href{https://doi.org/10.1090/mcom/4176}{\texttt{DOI}}}.

\bibitem{KST2022-exact-zeros-ed-degree}
{\sc K.~Kubjas, L.~Sodomaco, and E.~Tsigaridas}, {\em \bibTitleColor{Exact solutions in low-rank approximation with zeros}}, Linear Algebra Appl. 641 (2022), 67--97.
\newblock \bibClickableStyle{\href{https://doi.org/10.1016/j.laa.2022.01.021}{\texttt{DOI}}}.

\bibitem{LMW25}
{\sc Y.~Liu, L.~G. Maxim, and B.~Wang}, {\em \bibTitleColor{Maximal twisted {B}etti numbers of complex hyperplane arrangement complements}}, 
 Int. Math. Res. Not. IMRN 2026, no. 6, Paper No. rnag050. \bibClickableStyle{\href{https://doi.org/10.1093/imrn/rnag050}{\texttt{DOI}}}.

\bibitem{Max}
{\sc L.~G. Maxim}, \bibTitleColor{{\em Intersection Homology \& Perverse Sheaves with Applications to Singularities}}, Graduate Texts in Mathematics, 281. Springer, Cham, 2019.
\newblock \bibClickableStyle{\href{https://doi.org/10.1007/978-3-030-27644-7}{\texttt{DOI}}}.

\bibitem{ED-degree-MR4136171}
{\sc L.~G. Maxim, J.~I. Rodriguez, and B.~Wang}, {\em \bibTitleColor{Defect of {E}uclidean distance degree}}, Adv. in Appl. Math. 121 (2020), 102101, 22 pp. \bibClickableStyle{\href{https://doi.org/10.1016/j.aam.2020.102101}{\texttt{DOI}}}.

\bibitem{MRWp}
{\sc L.~G. Maxim, J.~I. Rodriguez, and B.~Wang}, {\em \bibTitleColor{Euclidean distance degree of projective varieties}},  Int. Math. Res. Not. IMRN 2021, no. 20, 15788--15802.
\newblock \bibClickableStyle{\href{https://doi.org/10.1093/imrn/rnz266}{\texttt{DOI}}}.

\bibitem{MRWh}
{\sc L.~G. Maxim, J.~I. Rodriguez, and B.~Wang}, {\em \bibTitleColor{Applications of singularity theory in applied algebraic geometry and algebraic statistics}}, in Handbook of geometry and topology of singularities {VII}, Springer, Cham, 2025, pp.~767--818.
\newblock \bibClickableStyle{\href{https://doi.org/10.1007/978-3-031-68711-2\_14}{\texttt{DOI}}}.

\bibitem{MS22}
{\sc L.~G. Maxim and J.~Sch\"{u}rmann}, {\em \bibTitleColor{Constructible sheaf complexes in complex geometry and applications}}, in Handbook of geometry and topology of singularities {III}, Springer, Cham, 2022, pp.~679--791.
\newblock \bibClickableStyle{\href{https://doi.org/10.1007/978-3-030-95760-5_10}{\texttt{DOI}}}.

\bibitem{ED-degree-MR4623843}
{\sc L.~G. Maxim and M.~Tib\u{a}r}, {\em \bibTitleColor{Euclidean distance degree and limit points in a {M}orsification}}, Adv. in Appl. Math. 152 (2024), Paper No. 102597, 20 pp. \bibClickableStyle{\href{https://doi.org/10.1016/j.aam.2023.102597}{\texttt{DOI}}}.

\bibitem{MS2021-invitation}
{\sc M.~Micha{\l}ek and B.~Sturmfels}, \bibTitleColor{{\em Invitation to nonlinear algebra}}, 
 Graduate Studies in Mathematics, 211. American Mathematical Society, Providence, RI, 2021.
\newblock \bibClickableStyle{\href{https://bookstore.ams.org/gsm-211}{\texttt{URL}}}.

\bibitem{OSS2014}
{\sc G.~Ottaviani, P.-J. Spaenlehauer, and B.~Sturmfels}, {\em \bibTitleColor{Exact solutions in structured low-rank approximation}}, SIAM J. Matrix Anal. Appl. 35 (2014), no. 4, 1521--1542.
\newblock \bibClickableStyle{\href{https://doi.org/10.1137/13094520X}{\texttt{DOI}}}.

\bibitem{S}
{\sc J.~Sch\"{u}rmann}, \bibTitleColor{{\em Topology of singular spaces and constructible sheaves}}, Monografie Matematyczne 63, Birkh\"{a}user Verlag, Basel, 2003.
\newblock \bibClickableStyle{\href{https://doi.org/10.1007/978-3-0348-8061-9}{\texttt{DOI}}}.

\end{thebibliography}

\noindent
\footnotesize {\bf Authors' addresses:}
\smallskip

\noindent Laurentiu G. Maxim, University of Wisconsin--Madison, USA \hfill {\tt {maxim@math.wisc.edu}}\newline
\url{https://www.math.wisc.edu/~maxim/}

\smallskip
\noindent Jose Israel Rodriguez, University of Wisconsin--Madison, USA \hfill {\tt  jose@math.wisc.edu}\newline
\url{https://sites.google.com/wisc.edu/jose/}

\smallskip
\noindent Botong Wang, University of Wisconsin--Madison, USA \hfill {\tt wang@math.wisc.edu}\newline
\url{http://www.math.wisc.edu/~wang/}
\end{document}